\newcommand{\excise}[1]{}
\newtheorem{thm}{Theorem}[section]
\newtheorem{lemma}[thm]{Lemma}
\newtheorem{cor}[thm]{Corollary}
\newtheorem{prop}[thm]{Proposition}
\newtheorem{conj}[thm]{Conjecture}
\newtheorem{prob}[thm]{Problem}
\theoremstyle{definition}
\newtheorem{example}[thm]{Example}
\newtheorem{remark}[thm]{Remark}
\newtheorem{defn}[thm]{Definition}
\numberwithin{equation}{section}
\renewcommand\>{\rangle}
\newcommand\<{\langle}
\newcommand\NN{\mathbb{N}}
\newcommand\RR{\mathbb{R}}
\newcommand\ZZ{\mathbb{Z}}
\newcommand\kk{\Bbbk}
\newcommand\mm{{\mathfrak m}}
\newcommand\ww{{\mathbf w}}
\newcommand\cc{{\mathbf c}}
\newcommand\free{{\mathcal F}}
\newcommand\ideal[1]{\langle #1 \rangle}
\DeclareMathOperator\Tor{Tor} 
\DeclareMathOperator\rank{rank} 
\DeclareMathOperator\Ap{Ap} 
\newcommand{\grm}{\operatorname{gr_\mm}}
\newcommand{\grmR}{{\grm(R)}}
\newcommand\s{\scriptstyle}
\newcommand\phm{\phantom{-}}
\newcommand\rlm{-}
\newcommand\filleftmap{\mathord\leftarrow \mkern-6mu
	\cleaders\hbox{$\mkern-2mu \mathord- \mkern-2mu$}\hfill
	\mkern-6mu \mathord-}
\begin{document}

\mbox{}
\title[Infinite free resolutions over numerical semigroup algebras]{Infinite free resolutions over numerical \\ semigroup algebras via specialization}

\author[T.~Gomes]{Tara Gomes}
\address{School of Mathematics\\University of Minnesota\\Minneapolis, MN 55455}
\email{gomes072@umn.edu}

\author[C.~O'Neill]{Christopher O'Neill}
\address{Mathematics Department\\San Diego State University\\San Diego, CA 92182}
\email{cdoneill@sdsu.edu}

\author[A.~Sobieska]{Aleksandra Sobieska}
\address{Mathematics Department\\University of Wisconsin Madison\\Madison, WI 53706}
\email{asobieska@wisc.edu}

\author[E.~Torres D\'avila]{Eduardo Torres D\'avila}
\address{School of Mathematics\\University of Minnesota\\Minneapolis, MN 55455}
\email{torre680@umn.edu}

\date{\today}

\begin{abstract}
Each numerical semigroup $S$ with smallest positive element $m$ corresponds to an integer point in a polyhedral cone $C_m$, known as the Kunz cone.  The~faces of $C_m$ form a stratification of numerical semigroups that has been shown to respect a number of algebraic properties of $S$, including the combinatorial structure of the minimal free resolution of the defining toric ideal $I_S$.  
In this work, we prove that the structure of the infinite free resolution of the ground field $\Bbbk$ over the semigroup algebra $\Bbbk[S]$ also respects this stratification, yielding a new combinatorial approach to classifying homological properties like Golodness and rationality of the poincare series in this setting.  Additionally, we give a complete classification of such resolutions in the special case $m = 4$, and demonstrate that the associated graded algebras do not generally respect the same stratification.  
\end{abstract}

\maketitle


\section{Introduction}
\label{sec:intro}

A \emph{numerical semigroup} is a cofinite, additively closed set $S \subseteq \ZZ_{\ge 0}$ containing 0.  We~often specify a numerical semigroup via a list of generators, i.e.,
$$S = \<n_1, \ldots, n_k\> = \{z_1n_1 + \cdots + n_kz_k : z_i \in \ZZ_{\ge 0}\}.$$
Any numerical semigroup has a unique minimal generating set under inclusion; the number $\mathsf e(S)$ of minimal generators is called the \emph{embedding dimension} of $S$, and the smallest generator $\mathsf m(S) = \min(S \setminus \{0\})$ is called the \emph{multiplicity} of $S$.  See~\cite{numerical} for a thorough introduction.  

A number of recent papers examine a family of convex rational polyhedral cones $C_m$, one for each integer $m \ge 2$, for which each numerical semigroup with multiplicity $m$ corresponds to an integer point in $C_m$.  Originally introduced in~\cite{kunz}, much of the recent work centers around the face structure of $C_m$ and the observation that two numerical semigroups $S$ and $T$ correspond to points on the interior of the same face of $C_m$ if and only if certain subsets of their divisibility posets coincide~\cite{wilfmultiplicity,kunzfaces1}.  
Geometric results in this direction include, for instance, a classification of the faces of $C_m$ containing points corresponding to certain well-studied families of numerical semigroups~\cite{kunzfaces2} and a realization of the faces of $C_m$ as cones in the Gr\"obner fans of certain lattice ideals~\cite{kunzfaces4}.  

Of particular relevance to the present manuscript are two recent works concerning the defining toric ideal $I_S$ of a numerical semigroup $S$.  
Kunz observed in~\cite{kunz} that if $S$ and $T$ lie on the interior of the same face of $C_m$, then the Betti numbers of $I_S$ and $I_T$ coincide.  
This result was strengthened in~\cite{kunzfaces3} to show that there exist minimal binomial generating sets for $I_S$ and $I_T$ that coincide except in the exponent of the variable with degree $m$, and further extended in~\cite{kunzfiniteres} to show the existence of minimal free resolutions of $I_S$ and $I_T$ whose matrix entries coincide except for the exponents of that same variable.  Said another way, the minimal resolutions of $I_S$ and $I_T$ are seen to have identical structure when $S$ and $T$ lie in the interior of the same face of $C_m$.  

Broadly, these results identify a novel method of combinatorially classifying the structure of minimal free resolutions of the defining toric ideals of numerical semigroups with a given multiplicity $m$.  This family of toric ideals has already seen some headway over the general setting, due in part to the unique set of tools numerical semigroups have to offer (see the survey~\cite{nsbettisurvey} and the references therein), and stratification by the faces of $C_m$ is an addition to this toolkit.  Indeed, this approach is already being put to use in the classification of possible Betti numbers for fixed $m$~\cite{minprescard}.  

This article, in the spirit of~\cite{hilbertovertoric,binomialfreeresnormal,shellmonoid2,shellmonoid}, seeks to utilize a combinatorial approach in understanding infinite free resolutions over numerical semigroup algebras. Such resolutions over singular rings are typically more complicated than those over polynomial rings, perhaps most evidently because such resolutions are almost always infinite. Some resolutions are known, in particular for the field $\kk$ \cite{anickserrecounterexample,burkehigherhomotopies,frobergpoincare,maclanehomology,priddykoszul,tateres}, either for special types of rings or without guarantee of minimality, but by and large these resolutions remain mysterious. Further details and references can be found in the surveys~\cite{avramovlectures,infinitefreeressurvey}.  

Following a similar roadmap to that of the finite case in~\cite{kunzfiniteres}, we construct in Section~\ref{sec:infiniteaperyres} an infinite free resolution of~$\kk$ over any numerical semigroup ring~$R$, which we call the \emph{infinite Ap\'ery resolution} of $\kk$, that is minimal if and only if $S$ is MED.  The maps in this resolution are seen to allow for simultaneous minimization for all semigroups $S$ and $T$ residing in the interior of the same face of $C_m$, resulting in a uniformity of resolution structure analogous to that obtained for the finite case in~\cite{kunzfiniteres}.  

One of the primary consequences of our results is that the Kunz cone does indeed allow one to classify resolutions of $\kk$ over numerical semigroup rings combinatorially. This combinatorial classification does not \textit{a priori} appeal to usual homological notions relevant in infinite resolutions, like complete intersection, Golod, and Koszul, yet some homological properties whose classifications remain elusive in general, such as Golodness and rationality of the Poincare series, respect this stratification.

As a demonstration of this approach, Section~\ref{sec:m4spec} exhibits a minimal free resolution whenever $S$ has multiplicity 4, obtained by considering each face of $C_4$ in turn.  
After a brief demonstration in Section~\ref{sec:associatedgraded} that the associated graded rings $\grmR$ do not seem to obey the stratification imposed by the Kunz cone, we close by identifying several future research directions in Section~\ref{sec:futurework}.  


\section{Background and Setup}
\label{sec:background}

Throughout this paper, fix a numerical semigroup $S \subseteq \ZZ_{\ge 0}$ with multiplicity 
$$\mathsf m(S) = \min(S \setminus \{0\}) = m,$$
and write
\begin{align*}
\Ap(S)
&= \{n \in S : n - m \notin S\} \\
&= \{0, a_1, \ldots, a_{m-1}\}
\end{align*}
for the set consisting of the minimal element of $S$ from each equivalence class modulo~$m$, with $a_i \equiv i \bmod m$ for each $i$.  In particular, 
$$S = \<m, a_1, \ldots, a_{m-1}\>,$$
though this generating set need not be minimal (e.g., if $a_i + a_j = a_{i+j}$ for some $i, j$).  If this generating set is minimal, we say $S$ has \emph{maximal embedding dimension} (MED).  For convenience, define $a_0 = m$.  

Grade $\kk[y, x_1, \ldots, x_{m-1}]$ by $S$ with $\deg(y) = m$ and $\deg(x_i) = a_i$.  Let $x_0 = y$.  Consider the ring homomorphism 
\begin{align*}
\varphi:\kk[y, x_1, \ldots, x_{m-1}] &\longrightarrow \kk[t] \\
x_i &\longmapsto t^{a_i},
\end{align*}
and let $I_S = \ker(\varphi)$, called the \emph{defining toric ideal} of $S$, and $R = \kk[y, x_1, \ldots, x_{m-1}]/I_S$.  It is known (see, e.g., \cite[Lemma~3.1]{kunzfiniteres}) that
\begin{equation}\label{eq:medbinomials}
I_S = \<x_ix_j - y^{b_{ij}}x_{i+j} : 1 \le i \le j \le m-1\>
\end{equation}
where each $b_{ij} = \tfrac{1}{m}(a_i + a_j - a_{i+j}) \ge 0$ is determined by the grading, and that the above is a minimal generating set if and only if $S$ is MED.  
Note the definition of $b_{ij}$ here differs from that in~\cite{kunzfiniteres}; the discrepency in matrix entries necessitates this distinction.  

\begin{example}\label{e:definingideal}
If $S = \<4, 5, 7\>$, then $\Ap(S) = \{0, 5, 10, 7\}$, and 
\begin{align*}
I_S
&= \<x_1^2 - x_2, x_2^2 - y^5, x_3^2 - yx_2, x_1x_2 - y^2x_3, x_1x_3 - y^3, x_2x_3 - y^3x_1\>
\\
&= \<x_1^2 - x_2, x_1^3 - y^2x_3, x_3^2 - yx_1^2, x_1x_3 - y^3\>
\end{align*}
We emaphasize that $R$ is the quotient by $I_S$, so
$$R = \kk[y, x_1, x_2, x_3]/I_S \cong \kk[y, x_1, x_3]/\<x_1^3 - y^2x_3, x_3^2 - yx_1^2, x_1x_3 - y^3\>,$$
and in particular $x_1^2 - x_2 = 0$ and $x_1x_2 - yx_3 = x_1^3 - x_3$ in $R$.  
\end{example}

A given collection of integers $a_1, \ldots, a_{m-1} \ge m$ with $a_i \equiv i \bmod m$ for each $i$ comprise the nonzero elements of the Ap\'ery set of a numerical semigroup $S$ if and only if the point $(a_1, \ldots, a_{m-1})$ lies in the cone $\mathcal C_m \subseteq \RR_{\ge 0}^{m-1}$ defined by the inequalities
$$
x_i + x_j \ge x_{i+j}
\qquad \text{for each} \qquad
i, j = 1, \ldots, m-1
\qquad \text{with} \qquad
i + j \ne 0,
$$
called the \emph{Kunz cone} due to~\cite{kunz}.  
Each facet equality $a_i + a_j = a_{i+j}$ occurs precisely when $b_{ij} = 0$, and this is not possible in the case $i + j = 0$.  As such, the \emph{Kunz poset} $P = (\ZZ_m, \preceq)$ of $S$, which sets $i \preceq j$ for distinct $i, j \ne 0$ if $a_j - a_i \in \Ap(S)$, is determined by which facets of $C_m$ contain $(a_1, \ldots, a_{m-1})$.   

With this in mind, if $(a_1, \ldots, a_{m-1})$ lies in the relative interior $F^\circ$ of a face $F \subseteq C_m$, we conflate notation and say $S$ lies in $F$.  In particular, a numerical semigroup $T$ lies in the same face of $C_m$ as $S$ if and only if its Kunz poset is identical to that of $S$.  

\begin{example}\label{e:kunzposet}
The Kunz posets of each face of $\mathcal C_3$ and $\mathcal C_4$ are depicted in Figure~\ref{fig:m3m4cones}.  The latter has defining inequalities
$$
2a_1 \ge a_2, 
\qquad
a_1 + a_2 \ge a_3, 
\qquad
a_2 + a_3 \ge a_1,
\qquad \text{and} \qquad
2a_3 \ge a_2.
$$
Resuming notation from Example~\ref{e:definingideal}, the point $(5,10,7) \in \mathcal C_4$ lies in the interior of the upper-left facet, and the Kunz poset depicted therein reflects that $x_1^2 - x_2 \in I_S$.  

The numerical semigroup $T = \<4, 13, 31\>$ lies in the same facet of $C_4$ as~$S$ since $\Ap(T) = \{0, 13, 26, 31\}$ with $26 = 2 \cdot 13$ and $13 + 26 > 31$.  In the generating sets
\begin{align*}
I_S &= \<x_1^2 - x_2, x_1^3 - y^2x_3, x_3^2 - yx_1^2, x_1x_3 - y^3\>, \\
I_T &= \<x_1^2 - x_2, x_1^3 - y^3x_3, x_3^2 - y^7x_1^2, x_1x_3 - y^{10}\>,
\end{align*}
each exponent of $y$ equals $b_{ij}$ for some $i, j$.  More specifically, both generating sets consist of the binomials
$$x_1^2 - x_2, 
\qquad
x_1^3 - y^{b_{12}}x_3, 
\qquad
x_3^2 - y^{b_{33}}x_1^2, 
\qquad \text{and} \qquad
x_1x_3 - y^{b_{13}+1},$$
and only the values $b_{12}$, $b_{13}$, and $b_{33}$ depend on which semigroup is chosen.  
\end{example}

\begin{figure}[t]
\includegraphics[width=2.5in]{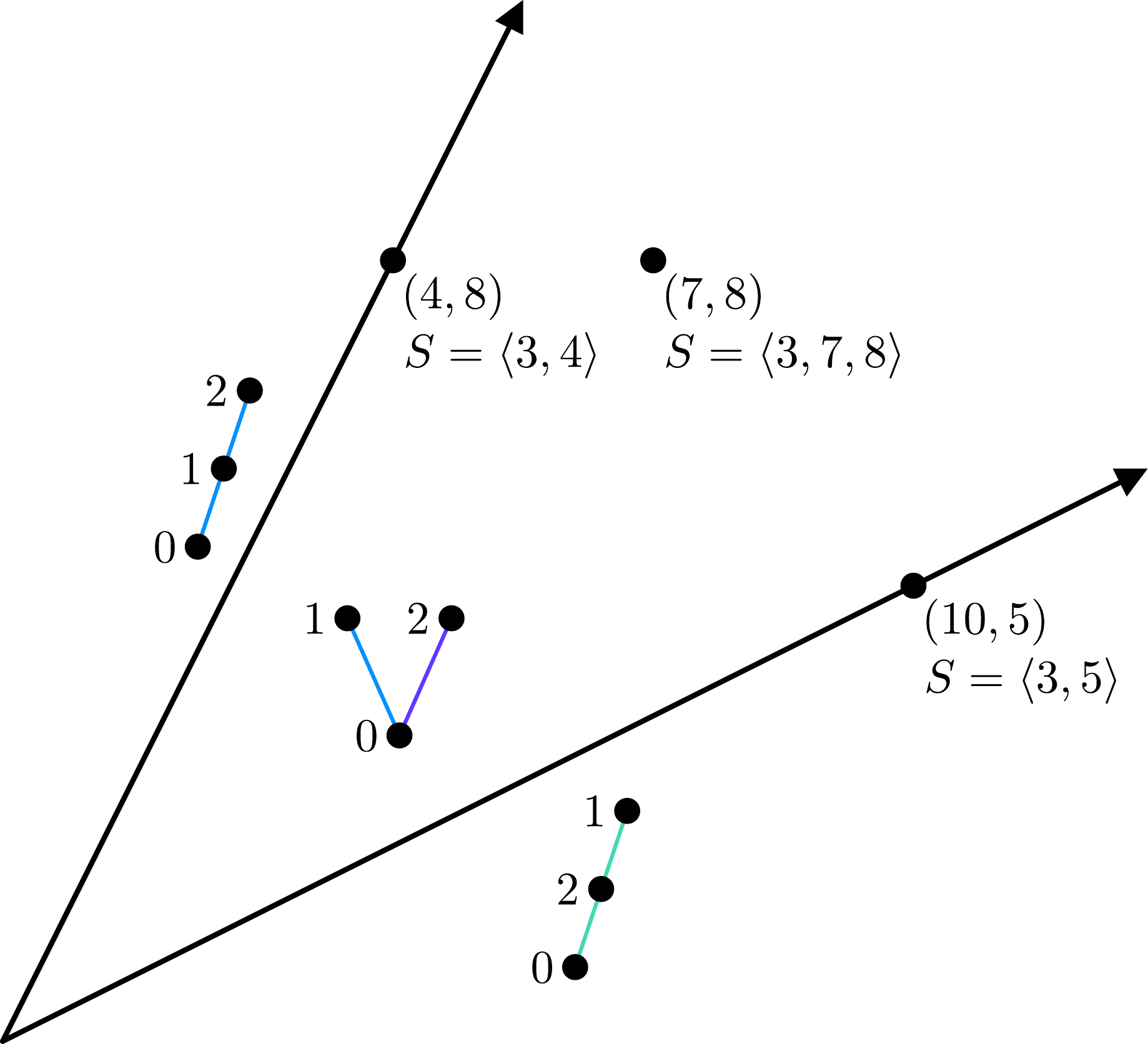}
\qquad\qquad
\includegraphics[width=2.5in]{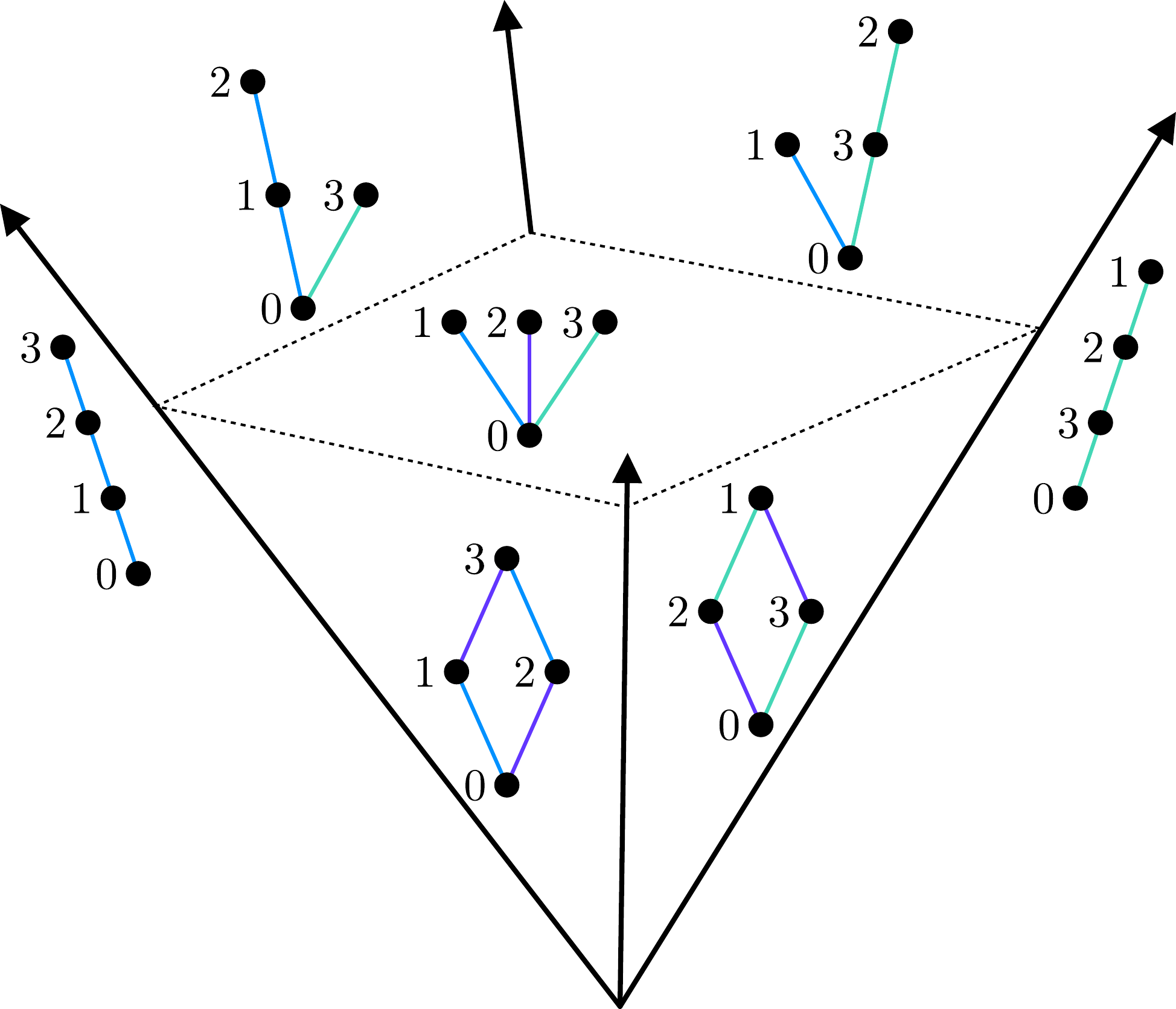}
\caption{The Kunz cones $C_3$ (left) and $C_4$ (right).  Each face containing numerical semigroups is labeled with its Kunz poset.}
\label{fig:m3m4cones}
\end{figure}

The second paragraph of Example~\ref{e:kunzposet} illustrates \cite[Corollary~5.16]{kunzfaces3}:\ if $S$ and $T$ are numerical semigroups in the same face of $C_m$, then there exist minimal binomial generating sets for $I_S$ and $I_T$ that coincide except in the exponents of $y$.  Each exponent of $y$ therein equals $b_{ij}$ for some $i,j$, and an expression for the generating set that is consistent across the face of $C_m$ containing $S$ and $T$ (such as the one in the last centered line of Example~\ref{e:kunzposet}) can be recovered directly from the Kunz poset of $S$ and $T$.  


This result was further extended in~\cite[Theorem~4.4]{kunzfiniteres}:\ there exist minimal free resolutions of $I_S$ and $I_T$ whose matrix entries coincide except in the exponents of~$y$.  Just~like the generating sets for $I_S$ and $I_T$ constructed in~\cite{kunzfaces3}, each exponent of $y$ in the matrices in these resolutions coincides with~$b_{ij}$ for some $i, j$.  This~result is best illustrated with an example; the following is an exerpt from \cite[Example~4.2]{kunzfiniteres}, though we encourage the reader to consult the full resolution depicted therein (the reader may also wish to consult~\cite{cca} for background on graded free resolutions, and~\cite{nsbettisurvey} in the context of numerical semigroups).  

\begin{example}\label{e:kunzfiniteres}
Resuming notation from Example~\ref{e:kunzposet}, there exist minimal free resolutions for $I_S$ and $I_T$ such that within each, the second matrix equals
$$
\left[
\begin{array}{@{}lllll@{}}
\phm x_1^3 - x_3 y^{b_{12}} & \phm x_3^2 - x_1^2 y^{b_{33}} & \phm x_1x_3 - y^{b_{13}+1} & \phm                        & \phm                      \\
\rlm (x_1^2 - x_2)            & \phm                        & \phm                     & \phm y^{b_{33}}             & \rlm x_3                  \\
\phm                          & \rlm (x_1^2 - x_2)          & \phm                     & \phm x_1                    & \rlm y^{b_{12}}           \\
\phm                          & \phm                        & \rlm (x_1^2 - x_2)       & \rlm x_3                    & \phm x_1^2                \\
\end{array}
\right]
$$
and only the exponents $b_{12}$, $b_{13}$, and $b_{33}$ depend on which semigroup is chosen.  
\end{example}

Unlike the construction of binomial generating sets, which can be done explicitly from the Kunz poset by~\cite{kunzfaces3}, the proof of~\cite[Theorem~4.4]{kunzfiniteres} is non-constructive.  Instead, a~explicit free resolution is constructed for $I_S$, called the \emph{Ap\'ery resolution}, that is minimal if and only if $S$ is MED.  It is then proven that there exists a sequence of row and column operations, dependent only on the face of $C_m$ containing $S$, that may be performed to obtain the minimal free resolution of $I_S$ from the Ap\'ery resolution.

\section{The infinite Ap\'ery resolution}
\label{sec:infiniteaperyres}

In this section, we construct a free resolution for $\kk$ over $R$ that is minimal if and only if $S$ is MED. 

\begin{defn}\label{d:infiniteaperyres}
The \textit{infinite Ap\'ery resolution} of $R$ is the free resolution
\begin{equation}
\mathcal F_\bullet: 0 \longleftarrow F_0 \longleftarrow F_1 \longleftarrow F_2 \longleftarrow \cdots,
\end{equation}
where each $F_d$ is generated as a module as 
$$
F_d = \<e_\ww : \ww = (w_1, w_2, \ldots, w_d) \in \ZZ_m^{d}\>,
\qquad \text{where} \qquad
\deg(e_\ww)
= \sum_{i = 1}^d \deg(x_{w_i})
$$
and $e_\ww = 0$ whenever $w_i = 0$ for some $i > 1$.  
One can readily check that $F_0 = R$ and 
$$\rank F_d = m(m-1)^{d-1}$$
for each $d \ge 1$.  Each map $\partial: F_d \to F_{d-1}$ in $\mathcal F_\bullet$ is given by
$$e_{\ww} \mapsto 
x_{w_d} e_{\widehat \ww} + \sum_{i=1}^{d-1} (-1)^{d-i} y^{b_{w_iw_{i+1}}} e_{\tau_i \ww}$$
for each $d \ge 1$, where ${\widehat \ww}= (w_1, w_2, \ldots, w_{d-1})$ and $\tau_i:\ZZ_m^{d} \to \ZZ_m^{d-1}$ is given by 
$$\tau_i \ww = (w_1, \ldots, w_{i-1}, w_i + w_{i+1}, w_{i+2}, \ldots, w_d).$$
Note that if $w_i = 0$ for some $i > 1$, then $\tau_i \ww = \tau_{i-1} \ww$, and $e_{\tau_j \ww} = 0$ whenever $j \notin \{i, i-1\}$, so under the above definition $\partial_d e_\ww = 0$ whenever $e_\ww = 0$.  Figure~\ref{fig:m4res} depicts the beginning of the resolution for $m = 4$.  

Lastly, we say a term $ct^n e_{\ww} \in F_d$ has \emph{class} $k$ if $n \equiv k \bmod m$.  In particular, any class~0 term can be written as $y^q e_{\ww}$ for some $q \ge 0$.  
\end{defn}

\begin{figure}[t]
\includegraphics[width=6.0in]{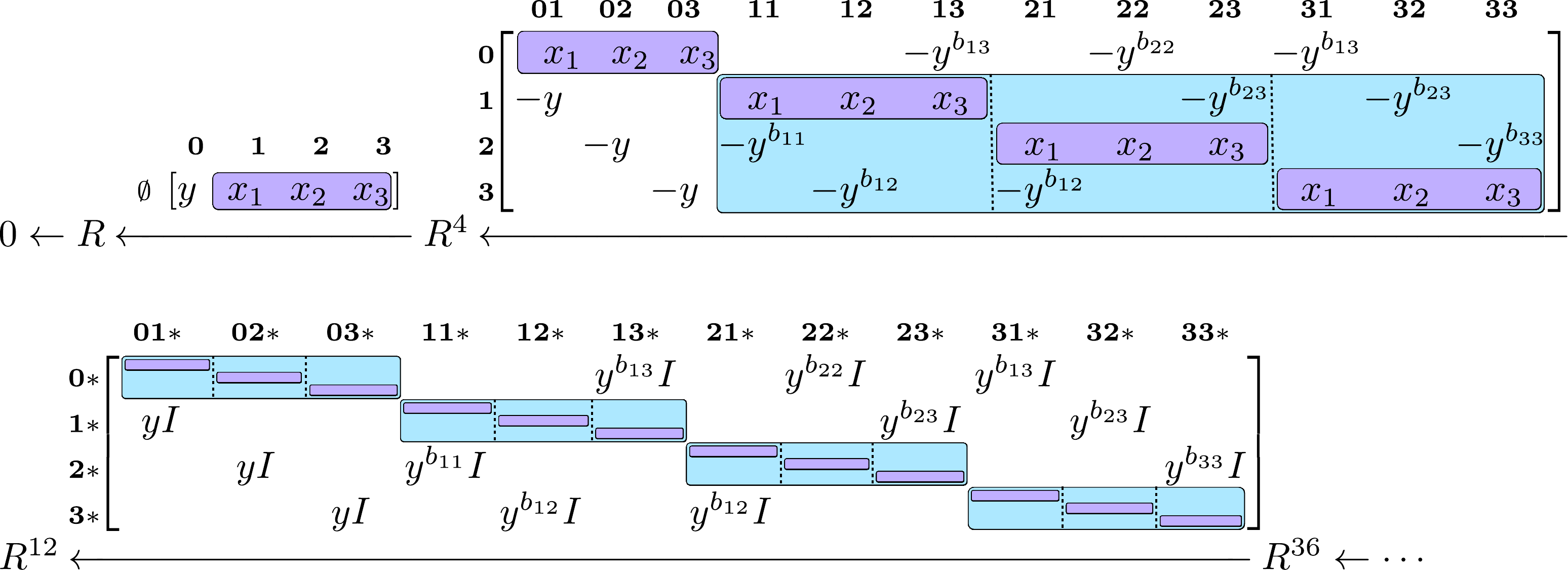}
\caption{The infinite multiplicity resolution for $m = 4$.}
\label{fig:m4res}
\end{figure}

\begin{thm}\label{t:medresolution}
The complex $\mathcal F_\bullet$ is a resolution.  Additionally, $\mathcal F_\bullet$ is minimal if and only if $S$ is MED.  
\end{thm}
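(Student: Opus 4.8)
The plan is to establish the three assertions---that $\mathcal F_\bullet$ is a complex, that it is exact, and the minimality criterion---separately, in that order.

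\smallskip
\emph{That $\partial^2=0$.} I would verify this by a direct, bar-type computation. Writing each differential as the sum of a \emph{drop} term $x_{w_d}e_{\widehat\ww}$ and the \emph{merge} terms $(-1)^{d-i}y^{b_{w_iw_{i+1}}}e_{\tau_i\ww}$, the composite $\partial^2$ splits into drop-after-drop, drop-after-merge (and merge-after-drop), and merge-after-merge contributions, precisely as for a normalized bar complex. Two identities force the cancellations. The first is the straightening rule $x_ix_j=y^{b_{ij}}x_{i+j}$, valid in $R$ by~\eqref{eq:medbinomials}, which identifies $x_{w_{d-1}}x_{w_d}$ with $y^{b_{w_{d-1}w_d}}x_{w_{d-1}+w_d}$ and thereby pairs each drop-after-drop term with an oppositely-signed merge-after-drop term. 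The second is the cocycle identity
\[
b_{ij}+b_{i+j,\,k}=b_{jk}+b_{i,\,j+k},
\]
immediate from $b_{ij}=\tfrac1m(a_i+a_j-a_{i+j})$ since both sides equal $\tfrac1m(a_i+a_j+a_k-a_{i+j+k})$; it guarantees that the two orders of collapsing three consecutive letters accrue the same power of $y$, so the merge-after-merge terms cancel in pairs. Together with the simplicial relations among the $\tau_i$ and the signs $(-1)^{d-i}$, this yields $\partial^2=0$.

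\smallskip
\emph{Exactness.} This is the heart of the matter, and where I expect the real work. The starting observation is that $R$ is a free $\kk[y]$-module on the Apéry monomials $1,x_1,\dots,x_{m-1}$, and that the straightening rule rewrites every product of two of these monomials into the normal form $y^qx_k$. Thus $\mathcal F_\bullet$ is exactly the normalized bar complex attached to this confluent normal form: $e_\ww$ records a length-$d$ word of Apéry letters whose first letter may lie in class~$0$ (the monomial $y$) and whose remaining letters lie in the nonzero classes. Since $\mathcal F_\bullet$ is graded by $S$, I would prove exactness one $S$-degree at a time, each graded piece being a finite-dimensional complex of $\kk$-vector spaces, by constructing a contracting homotopy $h$ in positive degrees. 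On a basis vector $t^u e_\ww$ the map $h$ peels the Apéry normal form off a portion of $t^u$ and records it as a new \emph{final} letter of $\ww$; confluence of the straightening rule is what makes the bar identity $\partial h+h\partial=\mathrm{id}$ hold.

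\smallskip
The main obstacle is precisely the special role of the class-$0$ generator $y$. A final letter cannot be split off when the residual degree is a pure power of $y$, and merges landing in class~$0$ carry $y^{b+1}$ rather than $y^{b}$; both effects introduce a twist that defeats any naive append-a-letter homotopy or plain tensor decomposition. I would resolve this by separating the $y$-direction: because $y=t^m$ is a nonzerodivisor, the class-$0$ leading letters assemble a Koszul complex on $y$, whose quotient is the Artinian algebra $\bar R=R/(y)$; over $\bar R$ the straightening rule degenerates to $\bar x_i\bar x_j=\bar x_{i+j}$ or $0$, and the remaining letters $x_1,\dots,x_{m-1}$ form an ordinary reduced bar resolution of $\kk$. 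Realizing $\mathcal F_\bullet$ as the total complex that grafts this bar resolution onto the Koszul complex on $y$---a mapping cone perturbed along the twisting merge terms---then reduces acyclicity of $\mathcal F_\bullet$ to acyclicity of its two constituents. Checking that the twisting terms (those that raise the $y$-exponent upon passage to class~$0$) are absorbed correctly is the delicate bookkeeping, and I expect it to be the crux of the entire argument.

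\smallskip
\emph{Minimality.} Granting exactness, minimality is an inspection of the entries of $\partial$: the complex is minimal if and only if every entry lies in $\mm$. The drop coefficient $x_{w_d}$ is always a non-unit (it equals $y$ when $w_d=0$), and a merge coefficient $y^{b_{w_iw_{i+1}}}$ is a non-unit exactly when $b_{w_iw_{i+1}}\ge 1$. One has $b_{0,w_2}=1$ when $w_1=0$, and $b_{w_iw_{i+1}}\ge 1$ whenever $w_i+w_{i+1}\equiv 0$; so the only entries that can be units are the $y^{b_{ij}}$ with $i,j,i+j\not\equiv 0$, and these are units precisely when $b_{ij}=0$, i.e.\ when $a_i+a_j=a_{i+j}$. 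If $S$ is MED no such relation exists, so every entry lies in $\mm$ and $\mathcal F_\bullet$ is minimal. Conversely, if $S$ is not MED there are $i,j\ne 0$ with $i+j\not\equiv 0$ and $b_{ij}=0$, whence
\[
\partial e_{(i,j)}=x_je_{(i)}-e_{(i+j)}
\]
has the unit entry $-1$ and $\mathcal F_\bullet$ is not minimal. This gives the stated equivalence.
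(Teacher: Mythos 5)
Your check that $\partial^2=0$ and your minimality argument are both correct and essentially match the paper's: the cocycle identity $b_{ij}+b_{i+j,k}=b_{jk}+b_{i,j+k}$ is the explicit form of what the paper extracts from $S$-homogeneity together with the identities among the maps $\tau_i$ and $\widehat{(\cdot)}$, and minimality fails precisely when some $b_{ij}=0$ with $i,j,i+j\not\equiv 0 \bmod m$, i.e., precisely when $S$ is not MED.

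The genuine gap is exactness, which you outline but explicitly leave unverified, and the reduction you propose does not work as stated. Decompose $F_d=A_d\oplus B_{d-1}$, where $B$ spans the words with leading letter $0$. The differential has components in \emph{both} directions between these summands: $B_{d-1}\to A_{d-1}$ is multiplication by $\pm y$ (since $b_{0j}=1$), but there are also components $A_d\to B_{d-2}$, namely the merges $\tau_1$ when $w_1+w_2\equiv 0 \bmod m$, carrying coefficients $\pm y^{b_{w_1w_2}}$ (the merges $\tau_i$ with $i>1$ landing in class $0$ are zero by convention). Hence $\mathcal F_\bullet$ is not the mapping cone of a chain map between your two constituents, and its acyclicity does not formally reduce to theirs. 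Worse, the induced map $d_A$ on the summand $A$ is not even square-zero: for $i,j\ne 0$ with $i+j\equiv 0$, one has $d_A e_{(i,j)}=x_je_{(i)}$ and then $d_A(x_je_{(i)})=x_ix_j=y^{b_{ij}+1}\ne 0$ in $F_0=R$; the full complex cancels this only through the $B$ route via the term $-y^{b_{ij}}e_{(0)}$. So before ``acyclicity of the two constituents'' even makes sense, you would need a filtration spectral sequence or a homological perturbation argument --- exactly the ``delicate bookkeeping'' you defer. As written, exactness is a plan, not a proof.

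The gap can be closed far more cheaply using the class decomposition you already set up, which is what the paper does. Fix $f\in\ker\partial_d$. If $f$ has a term $ct^ne_\ww$ of nonzero class, write $n=qm+a_r$ with $0<r<m$; then $\partial(y^qe_{\ww r})$ has exactly one nonzero-class term, namely $t^ne_\ww$, so $f-c\,\partial_{d+1}(y^qe_{\ww r})\in\ker\partial_d$ has strictly fewer nonzero-class terms. Once every term of $f$ has class $0$, each term $y^qe_\ww$ contributes to $\partial f$ the unique nonzero-class term $y^qx_{w_d}e_{\widehat\ww}$, and distinct terms of $f$ produce distinct such terms (they determine $\widehat\ww$, $w_d$, and $q$), so no cancellation is possible and $f=0$. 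This induction on the number of nonzero-class terms replaces your entire bar/Koszul grafting: no contracting homotopy, no passage to $R/(y)$, and no perturbation is needed.
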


\begin{proof}
We first check that $\mathcal F_\bullet$ is a complex.  If $d = 2$, then
\begin{align*}
\partial^2 e_{uv}
&= x_v \partial e_u - y^{b_{uv}} \partial e_{u+v}
= x_v x_u - y^{b_{uv}} x_{u+v} = 0.  
\end{align*}
On the other hand, fix $d \ge 2$ and $\ww \in \ZZ_m^d$ with $w_i \ne 0$ whenever $i > 1$.  Since $\partial e_\ww$ is homogeneous under the fine grading by $S$, all occurances of a given basis vector in $\partial^2 e_\ww$ have identical monomial coefficient, so it suffices to check the $\kk$-coefficients of each sum to~0.  To this end, since
\begin{enumerate}[(i)]
\item 
$\widehat{\tau_{d-1} \ww} = \widehat{\widehat \ww}$, 

\item 
$\tau_i \widehat \ww = \widehat{\tau_i \ww}$ for $1 \le i \le d-2$, and

\item 
$\tau_i \tau_j \ww = \tau_{j-1} \tau_i \ww$ for $1 \le i < j \le d-1$, 

\end{enumerate}
each basis vector arising in $\partial^2 e_\ww$ appears exactly twice, and with opposite signs.  

We next prove $\mathcal F_\bullet$ is a resolution.  The claim is clear for $d = 1$, so fix $d \ge 2$ and $f \in \ker \partial_d$.  We claim that if every term in $f$ has class 0, then $f = 0$.  Indeed, for any nonzero term $y^q e_{\ww}$ in $f$, the image $\partial y^q e_{\ww}$ has exactly one term with nonzero class, namely $y^q x_{w_d} e_{\widehat \ww}$.  Moreover, for any two nonzero terms $y^q e_{\ww}$ and $y^{q'} e_{\ww'}$ in $f$, if~$y^q x_{w_d} e_{\widehat \ww} = y^{q'} x_{w_d'} e_{\widehat \ww'}$, then $\widehat \ww = \widehat \ww'$, $w_d = w_d'$, and $q = q'$.  
As such, the image of any nonzero term in $f$ has a term that cannot be cancelled by the image of any other nonzero term in $f$.  This necessitates $f = 0$, proving the claim.  

Now, suppose $f$ has a nonzero term $t^n e_\ww$ with nonzero class.  Write $n = qm + a_r$ with $q, r \in \ZZ$ and $0 < r < m$, so that $t^n = y^q x_r$.  The image $\partial y^q e_{\ww r}$ has exactly one nonzero term with nonzero class, namely $y^q x_r e_\ww = t^n e_\ww$, so
$$f - \partial_{d+1} y^q e_{\ww r} \in  \ker \partial_d$$
has one fewer term with nonzero class than $f$ does.  The exactness of $\mathcal F_\bullet$ now follows from induction on the number of terms in $f$ with nonzero class.  

For the final claim, recall that a graded resolution is minimal if and only if the matrices for $\partial$ contain no nonzero constant entries.  The only matrix entries that depend on the particular values of $a_1, \ldots, a_{m-1}$ are the powers of $y$, and their exponents $b_{ij}$ are all positive precisely when $S$ is MED.  This completes the proof.  
\end{proof} 

The following consequence of Theorem~\ref{t:medresolution} has identical proof to \cite[Theorem~4.4]{kunzfiniteres}.  

\begin{cor}\label{c:specialization}
Consider the set 
$$
  \mathcal M
  =
  \{x_i : 1 \le i \le m-1\} \cup \{y^{b_{ij}} : 1 \le i, j \le m - 1\}
$$
of formal symbols appearing as matrix entries in infinite Ap\'ery resolutions.  Fix a face $F$ of $C_m$.  There is a sequence of matrices, whose entries are $\kk$-linear combinations of formal products of elements of $\mathcal M$, with the following property:\ for each numerical semigroup~$S$ in the relative interior of~$F$, substituting $R$-variables and the values $b_{ij}$ for $S$ into the entries of each matrix yields boundary maps for a graded minimal free resolution of $\kk$ over $R$.
\end{cor}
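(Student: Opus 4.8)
The plan is to mirror the strategy used for the finite case in \cite[Theorem~4.4]{kunzfiniteres}, leveraging the fact that the infinite Ap\'ery resolution $\mathcal F_\bullet$ of Definition~\ref{d:infiniteaperyres} is an explicit resolution of $\kk$ over $R$ whose nonminimality is entirely captured by the entries $y^{b_{ij}}$ with $b_{ij} = 0$. By Theorem~\ref{t:medresolution}, the matrices of $\mathcal F_\bullet$ are minimal precisely when $S$ is MED; otherwise, the only nonzero constant entries are the monomials $y^{b_{ij}} = 1$ for those pairs $i,j$ with $a_i + a_j = a_{i+j}$, equivalently the pairs encoding facet equalities defining the face $F$. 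The key observation is that whether $b_{ij} = 0$ depends only on the Kunz poset of $S$, hence only on the face $F$, and not on the particular semigroup in $F^\circ$. So the \emph{positions} of the unit entries to be eliminated, along with the combinatorial shape of all the surrounding entries, are constant across $F^\circ$.

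\emph{First} I would formalize the reduction by row and column operations. A constant unit entry $y^{b_{ij}}=1$ in some boundary matrix allows one to clear its row and column via invertible elementary operations over $R$, producing a smaller free complex that is still a resolution of $\kk$ (this is the standard minimization procedure: splitting off a trivial summand of a free complex preserves homology). \emph{The crucial point} is that these operations can be described symbolically, as manipulations of matrices whose entries are $\kk$-linear combinations of formal products of the symbols in $\mathcal M = \{x_i\} \cup \{y^{b_{ij}}\}$, \emph{without} committing to numerical values of the $b_{ij}$. Because the combinatorics of the resolution (which basis vectors $e_\ww$ vanish, which $\tau_i\ww$ coincide, and which $b_{ij}$ equal zero) is fixed by $F$, the same sequence of formal operations applies uniformly to every $S$ in $F^\circ$. \emph{Then} I would argue that substituting the actual $R$-variables and the actual values $b_{ij}$ for a chosen $S \in F^\circ$ into the symbolic matrices recovers precisely the minimized boundary maps for that $S$, and that the result is a minimal graded free resolution of $\kk$ over the corresponding $R$.

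\emph{The main obstacle} I anticipate is verifying that the symbolic row and column operations are \emph{well-defined and invertible after every substitution}, and in particular that an operation introduced to clear one unit entry does not accidentally create a new constant entry (or destroy minimality) for some other semigroup in the same face. One must check that the pivots used are always the entries that are constant for \emph{all} $S \in F^\circ$ — that is, exactly the $y^{b_{ij}}$ with $b_{ij}=0$ throughout $F^\circ$ — so that the elimination order and the resulting fill-in are genuinely face-dependent rather than semigroup-dependent. This is where the finite-case proof does the real work, and the claim that the present corollary ``has identical proof'' rests on the fact that $\mathcal F_\bullet$ shares the same structural feature exploited there: the only obstruction to minimality is the set of unit $y$-powers, whose positions are determined by $F$. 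Concretely, I would (i) fix $F$ and identify the set of pairs $(i,j)$ with $b_{ij}=0$ on $F^\circ$; (ii) run the minimization on the symbolic matrices treating each such $y^{b_{ij}}$ as a formal unit and every other $y^{b_{ij}}$ as a formal noninvertible element; (iii) invoke the finite-case argument of \cite{kunzfiniteres} verbatim to confirm that the resulting symbolic complex, upon substitution of any $S \in F^\circ$, is a minimal free resolution; and (iv) conclude by noting that minimality of the substituted complex follows because, after elimination, every remaining entry is either an $x_i$ or a $y^{b_{ij}}$ with $b_{ij} > 0$ on $F^\circ$, hence noninvertible in $R$.
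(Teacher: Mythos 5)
Your proposal is correct and follows essentially the same route as the paper, whose entire proof is the remark that the argument of \cite[Theorem~4.4]{kunzfiniteres} applies verbatim: starting from the explicit infinite Ap\'ery resolution of Theorem~\ref{t:medresolution}, one performs symbolic row and column operations pivoting on the unit entries $y^{b_{ij}}$ with $b_{ij}=0$, whose positions are determined by the face $F$ alone, so the minimization is uniform across $F^\circ$ and substitution of any $S$ in $F^\circ$ yields a minimal graded free resolution. Your step-by-step elaboration (identifying the face-constant pivots, treating the remaining $y^{b_{ij}}$ as formal noninvertible elements, and checking minimality of the substituted complex) is exactly the content of that cited argument.
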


The following results focus on the Betti numbers of these and related resolutions, which are encoded in the \textit{Poincar\'e series} of $R$, i.e., the formal power series
\[P^R_\kk(z) \coloneqq \sum\limits_{i=0}^\infty \beta_i^R(\kk) z^i.\]

\begin{remark}\label{r:kunzsecret}
Proposition~\ref{p:uniformpoincare} and Corollary~\ref{c:golod} below can be seen as consequences of \cite[Example~2.4]{kunz} using standard tools~\cite[Proposition~3.3.5]{avramovlectures}, and some interest was shown therein for classification of the Betti numbers and syzygies of $I_S$ for numerical semigroups $S$ with fixed multiplicity~$m$.  The~infinite Ap\'ery resolution is a step in this direction for the Betti numbers $\beta_i^R(\kk)$; we include short proofs of these results here as a demonstration of this fact.  
\end{remark}

\begin{prop}\label{p:uniformpoincare}
If $R$ and $R'$ are the respective semigroup rings of numerical semigroups $S$ and $S'$ in the same face of the Kunz cone $C_m$, then they have the same Poincar\'e series, that is, $P^R_\kk(z) = P^{R'}_\kk(z)$. In particular, the rationality of the Poincar\'e series of $R$ is determined by the face of $C_m$ containing $S$.  
\end{prop}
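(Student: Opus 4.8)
The plan is to deduce the equality of Poincar\'e series directly from the specialization result already established in Corollary~\ref{c:specialization}. Recall that the Poincar\'e series is $P^R_\kk(z) = \sum_{i \ge 0} \beta_i^R(\kk) z^i$, where the Betti number $\beta_i^R(\kk) = \dim_\kk \Tor_i^R(\kk,\kk)$ equals the rank of the $i$-th free module in any graded \emph{minimal} free resolution of $\kk$ over $R$. So it suffices to exhibit, for $R$ and $R'$ arising from $S$ and $S'$ in the same face $F$ of $C_m$, graded minimal free resolutions of $\kk$ whose free modules have equal ranks in every homological degree.

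Here is how I would carry this out. First I would apply Corollary~\ref{c:specialization} to the common face $F$ containing both $S$ and $S'$: this produces a single sequence of matrices, with entries that are $\kk$-linear combinations of formal products of the symbols in $\mathcal M$, such that substituting the $R$-variables and the specific values $b_{ij}$ for \emph{either} semigroup yields the boundary maps of a graded minimal free resolution of $\kk$ over the corresponding ring. The crucial point is that this sequence of matrices is fixed by the face $F$ alone; the only data that vary between $S$ and $S'$ are the exponents of $y$ appearing inside the entries. Consequently the two minimal free resolutions, one for $R$ and one for $R'$, have \emph{identical shape}: in each homological degree the boundary map is given by a matrix of the same dimensions, so the free modules $F_i$ have the same rank for $R$ as for $R'$. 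Since the Betti numbers are exactly these ranks, we get $\beta_i^R(\kk) = \beta_i^{R'}(\kk)$ for all $i$, and therefore $P^R_\kk(z) = P^{R'}_\kk(z)$.

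The final sentence of the statement then follows immediately: rationality of the Poincar\'e series is a property of the power series $P^R_\kk(z)$ itself, and since this series depends only on the face of $C_m$ containing $S$, so does its rationality.

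The main thing to be careful about is that Corollary~\ref{c:specialization} does not merely guarantee minimal resolutions for $S$ and $S'$ \emph{separately}; it guarantees that they arise from one and the same sequence of specialized matrices, which is precisely what forces the module ranks to agree degree-by-degree. I do not anticipate a genuine obstacle here, since the heavy lifting (the existence of the uniform specialized matrices and their minimality for every semigroup in $F$) is already done in Corollary~\ref{c:specialization}; the remaining argument is just the observation that equal matrix dimensions give equal Betti numbers. If one wished to avoid even this much, an alternative is to invoke the homological interpretation noted in Remark~\ref{r:kunzsecret}, but the specialization argument is self-contained and more in keeping with the paper's combinatorial approach.
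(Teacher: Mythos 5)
Your proof is correct, but it takes a genuinely different route from the paper's. You deduce everything from Corollary~\ref{c:specialization}: a single sequence of specialized matrices, fixed by the face $F$, yields graded minimal free resolutions of $\kk$ for every semigroup in $F^\circ$, so the free modules for $R$ and $R'$ have equal ranks in each homological degree, and these ranks are the Betti numbers. This is a legitimate one-step deduction with no circularity, since the corollary precedes the proposition and is proved independently. The paper instead argues directly from the infinite Ap\'ery resolutions $\free_\bullet$ and $\free_\bullet'$, which need not be minimal: because each $b_{ij}=0$ occurs simultaneously for $S$ and $S'$ on a common face, the $\pm 1$ entries sit in identical positions in the two differentials, so upon tensoring with $\kk$ every maximal-ideal entry vanishes and the complexes $\free_\bullet \otimes \kk$ and $\free_\bullet' \otimes \kk$ become literally identical, giving $\beta_i^R(\kk) = \dim_\kk H_i(\free_\bullet \otimes \kk) = \dim_\kk H_i(\free_\bullet' \otimes \kk) = \beta_i^{R'}(\kk)$. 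The trade-off: your argument is shorter but leans on the full strength of Corollary~\ref{c:specialization}, whose proof is non-constructive and imported wholesale from the simultaneous-minimization machinery of~\cite{kunzfiniteres}; the paper's computation of $\Tor$ from the explicit, possibly non-minimal resolution requires only Theorem~\ref{t:medresolution} plus the matching-units observation, so it is self-contained and sidesteps minimality entirely. Your closing remark that Remark~\ref{r:kunzsecret} offers an alternative homological derivation is consistent with what the paper itself notes.
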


\begin{proof}
The following proof is much in the spirit of the proof of \cite[Theorem~4.3]{kunzfiniteres}. 

Let $\free_\bullet$ and $\free_\bullet'$ be the infinite Ap\'ery resolutions for $\kk$ over $R$ and $R'$ respectively. If $\free_\bullet$ and $\free_\bullet'$ are not minimal, then they have $\pm 1$s in identical places in the differentials, since the only way for an entry to be $1$ in such a resolution is if some $b_{ij}=0$, which happens simultaneously for $S$ and $S'$ in the same face. Betti numbers can be realized as the dimension of $\Tor$, so we consider the homology of $\free_\bullet \otimes \kk$ and $\free_\bullet' \otimes \kk$. These tensored complexes are identical -- any element of the maximal ideal vanishes, leaving matching $\pm 1$s in matching places, which means the images and kernels of these new differentials are also identical. Now we can see that 
\begin{align*}
\beta_i^R(\kk)
&= \dim_\kk \Tor_i^R(\kk, \kk)
= \dim_\kk H_i(\free_\bullet \otimes \kk)
= \dim_\kk H_i(\free_\bullet' \otimes \kk)
\\
&= \dim_\kk \Tor_i^{R'}(\kk, \kk)
= \beta_i^{R'}(\kk),
\end{align*}
as desired.  
\end{proof}

Use $Q$ to denote the polynomial ring $\kk[y,x_1, \ldots, x_{m-1}]$ graded as in the definition of $R = Q/I_S$.  Recall that $R$ is \textit{Golod} when the Poincar\'e series $P^R_\kk(z)$ satisfies 
\begin{equation}\label{eq:golodequality}
P^R_\kk(z) = \frac{(1+z)^n}{1 - z^2 P^Q_I(z)},
\end{equation}
where $P^Q_I(z) = \sum\limits_{i \geq 0} \beta_i^Q(I) z^i$ and $n$ is the number of minimal generators of $S$. 

\begin{cor}\label{c:golod}
Golodness is a uniform property across the faces of $C_m$, that is, either all semigroups from a given face of $C_m$ give rise to Golod semigroup rings or none do. 
\end{cor}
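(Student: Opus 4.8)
The plan is to read the Golod condition \eqref{eq:golodequality} as an equality of two formal power series and to show that each side, taken separately, depends only on the face of $C_m$ containing the semigroup. Once both sides are known to be face invariants, the truth of the equality — i.e.\ Golodness — is itself a face invariant, which is precisely the assertion. So fix a face $F$ of $C_m$ and two numerical semigroups $S, S'$ lying in the relative interior of $F$, with rings $R = Q/I_S$ and $R' = Q/I_{S'}$, and aim to show that \eqref{eq:golodequality} holds for $R$ if and only if it holds for $R'$.

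First I would dispatch the left-hand side of \eqref{eq:golodequality}: by Proposition~\ref{p:uniformpoincare}, $P^R_\kk(z) = P^{R'}_\kk(z)$, since $S$ and $S'$ lie in the same face. This is the single place where the new infinite Ap\'ery resolution does the work.

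Next I would handle the right-hand side, which is assembled from the two quantities $n$ and $P^Q_I(z)$, and argue each is a face invariant. For $n$, the number of minimal generators of $S$: a generator $a_i$ is redundant precisely when $a_i = a_j + a_k$ for some nonzero $j,k \in \ZZ_m$ with $j + k \equiv i$, equivalently when $i$ is non-minimal in the Kunz poset of $S$. Since $S$ and $S'$ have identical Kunz posets (this is what it means to share a face), they have the same number of minimal generators, so the exponent $n$ agrees for the two rings. For $P^Q_I(z) = \sum_i \beta_i^Q(I_S) z^i$, I would invoke the finite-resolution results recalled in the introduction: by \cite{kunz}, refined in \cite{kunzfiniteres}, there are minimal free resolutions of $I_S$ and $I_{S'}$ over $Q$ whose matrices agree except in the exponents of $y$, so in particular the ranks agree in every homological degree. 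Hence $\beta_i^Q(I_S) = \beta_i^Q(I_{S'})$ for all $i$, whence $P^Q_{I_S}(z) = P^Q_{I_{S'}}(z)$.

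Combining these, the rational expression $(1+z)^n/(1 - z^2 P^Q_I(z))$ is identical for $S$ and $S'$, and so is $P^R_\kk(z)$; therefore \eqref{eq:golodequality} holds for $R$ exactly when it holds for $R'$. As the paper takes \eqref{eq:golodequality} as the definition of Golodness, this shows $R$ is Golod if and only if $R'$ is, giving the claim. I expect no serious obstacle, since the whole argument is invariance bookkeeping and its two nontrivial inputs are already in hand: invariance of the Poincar\'e series of $\kk$ (Proposition~\ref{p:uniformpoincare}) and invariance of the Betti numbers of $I_S$ (the cited finite-resolution theorems). The only point deserving a sentence of care is confirming that $n$ is genuinely combinatorial — determined by the Kunz poset rather than by the specific integers $a_i$ — which follows from the characterization of redundant generators above.
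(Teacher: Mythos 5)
Your proposal is correct and follows essentially the same route as the paper: invariance of the left side of \eqref{eq:golodequality} via Proposition~\ref{p:uniformpoincare}, and invariance of the right side via the face-invariance of the Betti numbers $\beta_i^Q(I_S)$ from the finite-resolution results (the paper cites \cite[Corollary~4.5]{kunzfiniteres}). Your explicit check that $n$ is determined by the Kunz poset --- since redundancy of a generator $a_i$ corresponds to a facet equality $a_j + a_k = a_{j+k}$ --- is a point the paper's proof leaves implicit, and is a worthwhile addition.
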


\begin{proof}
Let $S$ and $S'$ be two semigroups from the same face of $C_m$ and let $R$ and $R'$ be their numerical semgroup rings, respectively. By Proposition~\ref{p:uniformpoincare}, $P_\kk^R(z) = P_\kk^{R'}(z)$, and it is known from \cite[Corollary~4.5]{kunzfiniteres} that $\beta_d(I_S) = \beta_d(I_{S'})$ for all $d$, so $P^Q_{I_S}(z) = P^Q_{I_{S'}}(z)$.  Therefore, the left and right hand sides of \eqref{eq:golodequality} are identical for $S$ and $S'$.
\end{proof}

\begin{cor}\label{c:medgolod}
If $S$ is MED, then $R$ is Golod.  
\end{cor}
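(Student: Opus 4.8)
Because the paper \emph{defines} Golodness as the validity of the power-series identity~\eqref{eq:golodequality}, the plan is simply to compute both sides of that identity when $S$ is MED and check that they agree; no appeal to Massey products or the Serre inequality is needed. For the left-hand side I would use Theorem~\ref{t:medresolution}: when $S$ is MED the infinite Ap\'ery resolution $\mathcal F_\bullet$ is minimal, so $\beta_0^R(\kk) = \rank F_0 = 1$ and $\beta_d^R(\kk) = \rank F_d = m(m-1)^{d-1}$ for all $d \ge 1$. Summing the geometric series gives
$$P^R_\kk(z) = 1 + \sum_{d \ge 1} m(m-1)^{d-1} z^d = 1 + \frac{mz}{1-(m-1)z} = \frac{1+z}{1-(m-1)z}.$$

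For the right-hand side, note that MED forces $n = \mathsf e(S) = m$, so the numerator is $(1+z)^m$ and the only missing ingredient is $P^Q_{I_S}(z)$, the generating function of the Betti numbers $\beta_i^Q(I_S)$ of the defining toric ideal. Since $S$ is MED, the finite Ap\'ery resolution of~\cite{kunzfiniteres} is minimal, so its ranks determine these Betti numbers; the values are $\beta_i^Q(I_S) = (i+1)\binom{m}{i+2}$ (equivalently, an MED semigroup ring inherits the Betti numbers of the tangent-cone relations $(x_1,\ldots,x_{m-1})^2$, whose minimal resolution is the familiar linear one). A short binomial manipulation, using $\sum_j j\binom{m}{j}z^j = mz(1+z)^{m-1}$, then gives
$$z^2 P^Q_{I_S}(z) = \sum_{j \ge 2} (j-1)\binom{m}{j} z^j = 1 - (1+z)^{m-1}\big(1-(m-1)z\big),$$
so that $1 - z^2 P^Q_{I_S}(z) = (1+z)^{m-1}(1-(m-1)z)$.

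Substituting this into~\eqref{eq:golodequality} with $n = m$ yields
$$\frac{(1+z)^m}{1 - z^2 P^Q_{I_S}(z)} = \frac{(1+z)^m}{(1+z)^{m-1}(1-(m-1)z)} = \frac{1+z}{1-(m-1)z} = P^R_\kk(z),$$
which is precisely the Golod identity, so $R$ is Golod. The crux is the second step, pinning down $P^Q_{I_S}(z)$: the geometric sum and the binomial identity are routine, but establishing $\beta_i^Q(I_S) = (i+1)\binom{m}{i+2}$ requires knowing the minimal free resolution of the toric ideal for MED semigroups, obtained either from the (now minimal) finite Ap\'ery resolution of~\cite{kunzfiniteres} or by comparison with $(x_1,\ldots,x_{m-1})^2$. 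The main point to check with care is that those ranks really do assemble into the claimed generating function.
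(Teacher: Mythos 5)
Your proof is correct and takes essentially the same route as the paper: both use the minimality of the infinite Ap\'ery resolution (Theorem~\ref{t:medresolution}) to get $P^R_\kk(z) = \frac{1+z}{1-(m-1)z}$, the minimal finite Ap\'ery resolution of~\cite{kunzfiniteres} to get $P^Q_{I_S}(z) = \sum_{i}(i+1)\binom{m}{i+2}z^i = \frac{1-(1+z)^{m-1}(1-(m-1)z)}{z^2}$, and then verify the identity~\eqref{eq:golodequality} with $n = m$. Your explicit binomial manipulation merely fills in the algebra the paper leaves as ``one can readily verify.''
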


\begin{proof}
The resolution $\free_\bullet$ is minimal when $S$ is MED, so
\[
P^R_\kk(z)
= 1 + \sum\limits_{i=1}^\infty (m)(m-1)^{i-1} z^i
= 1 + \frac{mz}{1-(m-1)z}
= \frac{1+z}{1-(m-1)z}.
\]
On the other hand, the resolution of $I$ over $S$ is given by the Ap\'ery resolution~\cite{kunzfiniteres}, so one can readily verify that
\[
\begin{aligned}
P_I^Q(z) &= \sum\limits_{i=0}^{m-2} (i+1) \binom{m}{i+2}z^i
= \frac{1 - (1 + z)^{m-1}(1 - (m-1)z)}{z^2}
\end{aligned}
\]
and thus $R$ is Golod.  
\end{proof}

\begin{remark}\label{r:notgolod}
It is worth noting that, despite $R$ being Golod in the MED case, the infinite Ap\'ery resolution does \textit{not} use the same basis as the resolution of Golod--Eagon--Shamash~\cite{gulliksenlevin}. This is easily seen in even the second differential, where each column has exactly one $x_i$ entry, whereas any Eagon resolution will have columns with two $x$ entries, since it is based in part on the Koszul complex resolving $\kk$ over $Q$. 
	
\end{remark}

\section{Specialized resolutions for $m = 4$}
\label{sec:m4spec}

Throughout this section, assume $m = 4$, and as before, let $\Ap(S) = \{0, a_1, a_2, a_3\}$ with $a_i \equiv i \bmod 4$.  In what follows, we obtain the minimal resolution of $\kk$ over $\kk[S]$ by considering which face of $\mathcal C_4 \subseteq \RR^3$ contains $S$.  
Up to symmetry under $\ZZ_4^*$ action, $\mathcal C_4$ has two facets and one ray that contain numerical semigroups.  The ray is easy:\ any numerical semigroup in this face has the form $S = \<4, a_1\>$ with $a_1 \equiv 1 \bmod 4$, so $I_S = \<x_1^4 - y^{b_{13}}\>$ is principle and the resolution of $\kk$ over $\kk[S]$ is the one in Figure~\ref{fig:m4rayres} (due~to the periodicity of the matrices therein, it suffices to check exactness in two places to verify this is a resolution).  

When $S$ lies in a facet of $\mathcal C_4$, $S$ has 3 minimal generators, so the resolution of $\kk$ over $\kk[S]$ is more complicated.  The facets in one orbit of the $\ZZ_4^*$-action contain only complete intersection numerical semigroups, while semigroups in the facets in the other orbit are not.  We handle these two cases in turn.  

\begin{figure}[t]
$
\begin{array}{c@{\:}c@{\:}c@{\:}c@{\:}c@{\:}c@{\:}c@{\:}c@{\:}c}
&
\begin{blockarray}{rcc}
	\\
	&
	\s \mathbf{0} &
	\s \mathbf{1} \\
	\begin{block}{r@{\,\,}[*{2}{@{\,\,}l}]}
		\s \mathbf{\emptyset} & 
		y &
		x_1 \\
	\end{block}
\end{blockarray}
&&
\begin{blockarray}{@{}r@{\,\,\,}*{2}{@{}c}}
	&
	\s \mathbf{\ \ 01\ \ } &
	\s \mathbf{\ \ 13\ \ } \\
	\begin{block}{@{}l@{\,\,\,}[*{2}{@{}l}]}
		\s \mathbf{0} & \phm x_1        & \rlm y^{b_{13}}  \\
		\s \mathbf{1} & \rlm y          & \phm x_1^3       \\
	\end{block}
\end{blockarray}
&&
\begin{blockarray}{@{}r@{\,\,\,}*{2}{@{}c}}
	&
	\s \mathbf{\ 013\ } &
	\s \mathbf{\ 131\ } \\
	\begin{block}{@{}l@{\,\,\,}[*{2}{@{}l}]}
		\s \mathbf{01} & x_1^3      & \phm y^{b_{13}}  \\
		\s \mathbf{13} & y          & \phm x_1         \\
	\end{block}
\end{blockarray}
&&
\begin{blockarray}{@{}r@{\,\,\,}*{2}{@{}c}}
	&
	\s \mathbf{\ 0131\ } &
	\s \mathbf{\ 1313\ } \\
	\begin{block}{@{}l@{\,\,\,}[*{2}{@{}l}]}
		\s \mathbf{013} & \phm x_1        & \rlm y^{b_{13}}  \\
		\s \mathbf{131} & \rlm y          & \phm x_1^3       \\
	\end{block}
\end{blockarray}
&
\\[-1em]
0 \leftarrow R & \filleftmap & R^2 & \filleftmap & R^2 & \filleftmap & R^2 & \filleftmap & R^2 \leftarrow \cdots
\end{array}
$
\caption{Specialized resolution when $m = 4$, $a_2 = 2a_1$, and $a_3 = 3a_1$.}
\label{fig:m4rayres}
\end{figure}

\subsection{A facet of $\mathcal C_4$ containing CI numerical semigroups}

For this subsection, suppose $a_3 = a_1 + a_2$ and thus $x_3 = x_1x_2$, so that $S = \<4, a_1, a_2\>$ lies in the lower-left facet of $C_4$ in Figure~\ref{fig:m3m4cones}.  

It turns out the infinite Ap\'ery resolution over $R$ can in this case be minimized leaving only the summands $e_\ww$ for which $w_i \le w_{i+1}$ for each $i$.  This can be seen in the row and column labels on the matrices in Figure~\ref{fig:m4ciface}.  However, note that some columns have more than one $x_i$ entry.  As such, it will be convenient to index the basis vectors in this resolution by multisets $\cc$ of elements of $\{0,1,2\}$.  

To this end, let $\mathcal A$ denote the collection of all finite multisets comprised of elements from $\{0,1,2\}$.  For $\cc \in \mathcal A$, let $c_i$ denote the number of copies of $i$ in $\cc$, and write $|\cc| = c_0 + c_1 + c_2$ for the cardinality of $\cc$.  For each $i = 0, 1, 2$, denote by $\cc + i$ the multiset with one additional copy of $i$, and if $c_i > 0$, denote by $\cc - i$ the multiset with one less copy of $i$.  Define
$$
F_d' = \<e_\cc' : \cc \in \mathcal A, \, |\cc| = d\>,
\qquad \text{where} \qquad
\deg(e_\cc') = \sum_{i = 0}^2 c_i \deg(x_i).
$$
Let $e_\cc' = 0$ if $c_0 > 1$, and for convenience, write $e_{\cc - i}' = 0$ if $c_i = 0$.  Clearly
\[
\rank F_d' = 2d + 1.
\]
Define $\partial:F_d' \to F_{d-1}'$ by
$$
e_\cc' \mapsto \begin{cases}
x_2 e_{\cc-2}' + x_1 e_{\cc-1}' - y^{b_{11}}e_{\cc-11+2}' + (-1)^{d-1}(ye_{\cc-0}' + y^{b_{22}}e_{\cc-22+0}')
& \text{if $2 \mid c_2$;} \\
x_2 e_{\cc-2}' - x_1 e_{\cc-1}' + (-1)^{d-1}(ye_{\cc-0}' + y^{b_{22}}e_{\cc-22+0}')
& \text{if $2 \nmid c_2$,}
\end{cases}
$$
a few matrices of which are depicted in Figure~\ref{fig:m4ciface} (in the row and column labels therein are multisets, e.g., $\mathbf{0111122}$ denotes the multiset $\cc$ with $c_0 = 1$, $c_1 = 4$, and $c_2 = 2$).  

\begin{figure}[t]
$
\begin{array}{c@{\:}c@{\:}c@{\:}c@{\:}c@{\:}c@{\:}c}
&
\begin{blockarray}{rccc}
	\\ \\ \\ \\
	&
	\s \mathbf{0} &
	\s \mathbf{1} &
	\s \mathbf{2} \\
	\begin{block}{r@{\,\,}[*{3}{@{\,\,}l}]}
		\s \mathbf{\emptyset} & 
		y &
		x_1 &
		x_2 \\
	\end{block}
\end{blockarray}
&&
\begin{blockarray}{@{}r@{\,\,\,}*{5}{@{}c}}
	\\ \\
	&
	\s \mathbf{01} &
	\s \mathbf{02} &
	\s \mathbf{11} &
	\s \mathbf{12} &
	\s \mathbf{22} \\
	\begin{block}{@{}l@{\,\,\,}[*{5}{@{}l}]}
		\s \mathbf{0} & \phm x_1        & \phm x_2        & \phm            & \phm            & \rlm y^{b_{22}}  \\
		\s \mathbf{1} & \rlm y          & \phm            & \phm x_1        & \phm x_2        & \phm             \\
		\s \mathbf{2} & \phm            & \rlm y          & \rlm y^{b_{11}} & \rlm x_1        & \phm x_2         \\
	\end{block}
\end{blockarray}
&&
\begin{blockarray}{@{}r@{\,\,\,}*{7}{@{}c}}
	&
	\s \mathbf{011} &
	\s \mathbf{012} &
	\s \mathbf{022} &
	\s \mathbf{111} &
	\s \mathbf{112} &
	\s \mathbf{122} &
	\s \mathbf{222} \\
	\begin{block}{@{}l@{\,\,\,}[*{7}{@{}l}]}
		\s \mathbf{01} & \phm x_1        & \phm x_2        & \phm            & \phm            & \phm            & \phm y^{b_{22}} & \phm            \\
		\s \mathbf{02} & \rlm y^{b_{11}} & \rlm x_1        & \phm x_2        & \phm            & \phm            & \phm            & \phm y^{b_{22}} \\
		\s \mathbf{11} & \phm y          & \phm            & \phm            & \phm x_1        & \phm x_2        & \phm            & \phm            \\
		\s \mathbf{12} & \phm            & \phm y          & \phm            & \rlm y^{b_{11}} & \rlm x_1        & \phm x_2        & \phm            \\
		\s \mathbf{22} & \phm            & \phm            & \phm y          & \phm            & \phm            & \phm x_1        & \phm x_2        \\
	\end{block}
\end{blockarray}
&
\\[-1em]
0 \leftarrow R & \filleftmap & R^3 & \filleftmap & R^5 & \filleftmap
\end{array}
$

$
\begin{array}{c@{\:}c@{\:}c@{\:}c@{\:}c@{}c}
&
\begin{blockarray}{@{}r@{\,\,\,}*{9}{@{}c}}
	&
	\s \mathbf{\ 0111\ } &
	\s \mathbf{\ 0112\ } &
	\s \mathbf{\ 0122\ } &
	\s \mathbf{\ 0222\ } &
	\s \mathbf{\ 1111\ } &
	\s \mathbf{\ 1112\ } &
	\s \mathbf{\ 1122\ } &
	\s \mathbf{\ 1222\ } &
	\s \mathbf{\ 2222\ } \\
	\begin{block}{@{}l@{\,\,\,}[*{9}{@{}l}]}
		\s \mathbf{011} & \phm x_1        & \phm x_2        & \phm            & \phm            & \phm            & \phm            & \rlm y^{b_{22}} & \phm            & \phm            \\
		\s \mathbf{012} & \rlm y^{b_{11}} & \rlm x_1        & \phm x_2        & \phm            & \phm            & \phm            & \phm            & \rlm y^{b_{22}} & \phm            \\
		\s \mathbf{022} & \phm            & \phm            & \phm x_1        & \phm x_2        & \phm            & \phm            & \phm            & \phm            & \rlm y^{b_{22}} \\
		\s \mathbf{111} & \rlm y          & \phm            & \phm            & \phm            & \phm x_1        & \phm x_2        & \phm            & \phm            & \phm            \\
		\s \mathbf{112} & \phm            & \rlm y          & \phm            & \phm            & \rlm y^{b_{11}} & \rlm x_1        & \phm x_2        & \phm            & \phm            \\
		\s \mathbf{122} & \phm            & \phm            & \rlm y          & \phm            & \phm            & \phm            & \phm x_1        & \phm x_2        & \phm            \\
		\s \mathbf{222} & \phm            & \phm            & \phm            & \rlm y          & \phm            & \phm            & \rlm y^{b_{11}} & \rlm x_1        & \phm x_2        \\
	\end{block}
\end{blockarray}
&
\\[-1em]
\hspace{8em}
R^{7} & \filleftmap & R^{9} \leftarrow \cdots
\end{array}
$
\caption{Specialized resolution when $m = 4$ and $a_1 + a_2 = a_3$.}
\label{fig:m4ciface}
\end{figure}

\begin{remark}\label{r:citate}
With enough patience and a healthy disrespect for checking signs, the resolution given in  $\mathcal{F}_\bullet'$ can be seen as the Tate complex~\cite{tateres} resolving the field over a complete intersection. Indeed, pairs of ones and twos in the indexing multisets correspond to powers of $y_1$ and $y_2$ in the divided power algebra when the Tate complex is cast in the language of higher homotopies (see~\cite{minresoverci} and the references therein). Nevertheless, we include it here both for completeness and to indicate that such resolutions are possible to build for numerical semigroup rings without appealing to the higher homological toolkit.  
\end{remark}

\begin{thm}\label{t:m4ciface}
Under the above definitions, $\mathcal F_\bullet'$ is a resolution of $\kk$ over $\kk[S]$ that is minimal if and only if $b_{11} > 0$.  
\end{thm}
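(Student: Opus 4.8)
The plan is to establish three things in turn: that $\mathcal F_\bullet'$ is a complex ($\partial^2=0$), that it is exact in positive homological degrees (so that it resolves $\kk=H_0$), and that it is minimal exactly when $b_{11}>0$. The minimality statement is quickest, so I record it first, granting exactness. A graded free complex is a minimal resolution precisely when every differential matrix entry lies in the maximal ideal $\mathfrak{m}$. The entries of $\partial$ are $x_1,x_2,y,y^{b_{11}},y^{b_{22}}$. Since $2+2\equiv 0\pmod 4$, the pair $(2,2)$ is of the excluded type $i+j=0$, so $b_{22}>0$ strictly and $y^{b_{22}}\in\mathfrak{m}$; likewise $x_1,x_2,y\in\mathfrak{m}$. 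Thus the only entry that can fail to lie in $\mathfrak{m}$ is $y^{b_{11}}$, which is a unit exactly when $b_{11}=0$, giving minimality iff $b_{11}>0$.

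For $\partial^2=0$ I would argue combinatorially, in the spirit of the check in the proof of Theorem~\ref{t:medresolution}. Because $\partial$ preserves the fine $S$-grading, all occurrences of a given basis vector in $\partial^2 e'_\cc$ carry the same monomial coefficient, so it suffices to see the $\kk$-contributions cancel in pairs. Each pair is a two-step move performed in the two possible orders: the $x_1$- and $x_2$-removals commute (the $x_ix_j=x_jx_i$ cancellations), while the mixed contributions encode the two defining relations of $R$ after eliminating $x_3=x_1x_2$, namely $x_1^2=y^{b_{11}}x_2$ (from the move $\cc\mapsto\cc-11+2$) and $x_2^2=y^{b_{22}}x_0=y^{b_{22}+1}$ (from $\cc\mapsto\cc-22+0$, with $x_0=y$). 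The one genuinely fiddly point is that applying $\partial$ flips the parity of $c_2$, so the second application invokes the other branch of the case definition; the signs $(-1)^{d-1}$ are arranged precisely so that the two orders cancel. This is the multiset analogue of identities (i)--(iii) in the proof of Theorem~\ref{t:medresolution}, and is pure bookkeeping.

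Exactness is the heart of the matter, and I expect it to be the main obstacle. I would adapt the residue-class filtration from the proof of Theorem~\ref{t:medresolution}: assign to a term $t^n e'_\cc$ the class $n\bmod 4$ and reduce to a homogeneous cycle $f\in\ker\partial_d$. One first shows that a cycle all of whose terms have class $0$ must vanish: the class-$2$ part of its image involves the injective substitution $\cc\mapsto\cc-2$ and the class-$1$ part the injective substitution $\cc\mapsto\cc-1$, so $\partial f=0$ forces all coefficients to be zero successively. Then, given a cycle with a nonzero-class term, one writes its coefficient as $y^q x_r$ with $x_r\in\{x_1,x_2,x_1x_2\}$ (using $a_3=a_1+a_2$, so $x_3=x_1x_2$) and cancels it by subtracting a suitable boundary $\pm\partial_{d+1}$ of an element obtained by appending the corresponding index or indices to $\cc$. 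The complication is that, unlike in the MED case of Theorem~\ref{t:medresolution}, $\partial e'_\cc$ carries up to two nonzero-class terms (one each from $x_1$ and $x_2$), so these corrections reintroduce further nonzero-class terms of the same internal degree; the single-term peeling must therefore be organized along a suitable term order on the basis vectors $e'_\cc$ refining the internal grading, chosen so that each correction produces only strictly earlier terms, turning termination into a Noetherian induction rather than a naive count.

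Finally, as both a consistency check and a conceptual alternative, I would record the complete-intersection structure flagged in Remark~\ref{r:citate}. Eliminating $x_3$ presents $R\cong\kk[y,x_1,x_2]/(x_1^2-y^{b_{11}}x_2,\,x_2^2-y^{b_{22}+1})$, and for $b_{11}>0$ these two polynomials form a regular sequence (the first is irreducible and linear in $x_2$, and the second is a nonzerodivisor modulo it), so $R$ is a codimension-$2$ complete intersection on the minimal generators $y,x_1,x_2$, with $P^R_\kk(z)=\tfrac{(1+z)^3}{(1-z^2)^2}=\tfrac{1+z}{(1-z)^2}=\sum_{d\ge0}(2d+1)z^d$. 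Since $\rank F_d'=2d+1$, the ranks of $\mathcal F_\bullet'$ are exactly the Betti numbers forced by the complete-intersection structure, independently confirming the minimality dichotomy; and matching $\mathcal F_\bullet'$ termwise with the Tate complex on the two complete-intersection variables (the pairs of $1$'s and $2$'s in the multiset indices playing the role of divided powers) yields $\partial^2=0$ and exactness simultaneously, at the cost of the sign dictionary the remark describes. I would lead with the self-contained residue-class argument and use this complete-intersection computation as the verification of the Betti numbers.
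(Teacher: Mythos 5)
Your minimality argument and your $\partial^2 = 0$ sketch are essentially the paper's (the paper likewise verifies one parity/$c_0$ case of the cancellation, reducing to the relations $x_1^2 - y^{b_{11}}x_2$ and $x_2^2 - y^{b_{22}+1}$, and declares the rest similar; it also uses $b_{22} > 0$ exactly as you do). The genuine gap is in exactness, at precisely the point you flag and then defer. Your plan to organize the peeling ``along a suitable term order \dots chosen so that each correction produces only strictly earlier terms'' cannot work for the moves you describe: to cancel a class~1 term $c\,t^n e'_\cc$ with $c_2 > 0$ you subtract $c\,t^{n-a_1}\partial e'_{\cc+1}$, whose $x_2$-component creates a class~2 term supported on $e'_{\cc+1-2}$, and the move that cancels \emph{that} term is subtraction of a multiple of $\partial e'_{(\cc+1-2)+2} = \partial e'_{\cc+1}$, which recreates the original class~1 term on the nose. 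The corrections genuinely loop around a $2$-cycle, so no well-founded order on the basis vectors makes them strictly decrease; the obstruction is structural, not a matter of cleverer bookkeeping, and ``Noetherian induction'' is asserted rather than available.

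The paper breaks the loop with an extra input from the cycle condition itself, which your proposal is missing. After clearing class~3 terms (corrections produce only classes~1 and~2, using $a_3 = a_1 + a_2$ to see $n - a_2 \in S$) and then class~2 terms (corrections produce only classes~0 and~1), so that the plain count of nonzero-class terms works down to this point, it observes: if $f \in \ker\partial$ has no class~2 or~3 terms and $c\,t^n e'_\cc$ is a class~1 term with $c_2 > 0$, then $\partial f$ contains the class~3 term $c\,t^n x_2\, e'_{\cc-2}$, and a class-by-class support check shows nothing else in $\partial f$ can cancel it --- contradiction. Hence every class~1 term of a cycle has $c_2 = 0$, the dangerous $x_2$-component of its correction vanishes (since $e'_{\cc+1-2} = 0$), only class~0 terms are introduced, and the naive count terminates; the final step, that a purely class~0 cycle vanishes, you have correctly. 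You need this $c_2 = 0$ lemma or an equivalent. Your fallback via the presentation $R \cong \kk[y,x_1,x_2]/(x_1^2 - y^{b_{11}}x_2,\, x_2^2 - y^{b_{22}+1})$ and the Tate complex is a legitimate alternative in principle, and your rank check $\rank F'_d = 2d+1$ against $P^R_\kk(z) = (1+z)/(1-z)^2$ is a valid consistency test; but as written it defers exactly the isomorphism of complexes (``the sign dictionary'') that would constitute a proof --- this is the identification the paper's Remark~\ref{r:citate} flags and deliberately avoids relying on, ``including it here \dots{} without appealing to the higher homological toolkit.''
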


\begin{proof}
To verify $\partial^2 e_\cc = 0$, there are 4 cases to consider, based on whether $c_0 = 0$ and whether $2 \mid c_2$.  We only show the case where $c_0 = 1$ and $2 \nmid c_2$ here, as the other cases are similar.  In this case, we see
\begin{align*}
\partial^2 e_\cc
&= \partial(x_2 e_{\cc-2}' - x_1 e_{\cc-1}' + (-1)^{d-1}ye_{\cc-0}')
\\
&= x_2 (x_2 e_{\cc-22}' + x_1 e_{\cc-12}' - y^{b_{11}}e_{\cc-11}' + (-1)^{d-2}ye_{\cc-02}') \\
& \qquad - x_1 (x_2 e_{\cc-12}' - x_1 e_{\cc-11}' + (-1)^{d-1}ye_{\cc-01}') \\
& \qquad + (-1)^{d-1}y (x_2 e_{\cc-02}' - x_1 e_{\cc-01}' + (-1)^{d-2}y^{b_{22}}e_{\cc-22}')
\\
&= (x_2^2 - y^{b_{22}+1})e_{\cc-22}' + (x_1^2 - x_2 y^{b_{11}})e_{\cc-11}'
= 0.
\end{align*}

We now prove exactness.  
Fix $f \in \ker\partial_d$.  If $a$ has a nonzero term $c t^n e_\cc'$ of class~3, then $a_3 = a_1 + a_2$ implies $n - a_2 \in S$, so replacing $f$ with $f - ct^{n-a_2}\partial e_{\cc+2}'$ results in one fewer class~3 term.  Repeating this as necessary, we may assume $f$ has no class~3 terms.  Next, for each class~2 term of $f$, we may do an analogous replacement, and subsequently assume $f$ only has terms of class~0 and~1.  

Now, suppose $c t^n e_\cc'$ is a nonzero term in $f$ with class~1.  If $c_2 > 0$, then $\partial(c t^n e_\cc')$ has a class~3 term that cannot be cancelled by the image of any other terms in $f$.  As~such, we must have $c_2 = 0$, and replacing $f$ with $f - c t^{n-a_1} \partial e_{\cc+1}'$ results in one fewer term with nonzero class.  With this, we may assume every term in $f$ is class~0.  However, each term in $f$ then has one or two terms in its image under $\partial$ with nonzero class, neither of which can cancel with any other term in $\partial f$.  We conclude $f = 0$.  

For the minimality claim, $a_2 > m$ implies $b_{22} > 0$, so every term in the definition of~$\partial$ has a positive degree coefficient precisely when $b_{11} > 0$.  This completes the proof.  
\end{proof}

\subsection{A facet of $\mathcal C_4$ containing non-CI numerical semigroups}

For this subsection, suppose $a_2 = 2a_1$ and thus $x_2 = x_1^2$, so that $S = \<4, a_1, a_3\>$ lies in the upper-left facet of $C_4$ in Figure~\ref{fig:m3m4cones}.  
Resuming notation from Definition~\ref{d:infiniteaperyres}, let
$$
F_d = \<e_\ww : \ww = (w_1, w_2, \ldots, w_d) \in \ZZ_4^{d}\>,
\qquad \text{where} \qquad
\deg(e_\ww) = \sum_{i = 1}^d \deg(x_{w_i}).
$$
Let $W_d \subseteq \ZZ_m^d$ denote the set of $\ww \in \ZZ_m^d$ such that $w_1 \in \{0,1,3\}$ and for each $i \ge 2$ we have $w_i \in \{2,3\}$ if $w_{i-1} = 1$ and $w_i \in \{1,3\}$ otherwise.  
Let 
$$
F_d' = \<e_\ww : \ww \in W_d\>  \subseteq F_d$$
so that
$$\rank F_d' = 3 \cdot 2^{d-1}
\qquad \text{for} \qquad 
d \ge 1.$$

We show in Theorem~\ref{t:m4nciface} that row and column operations may be performed to the infinite Ap\'ery resolution to obtain a resolution whose first few matrices are depicted in Figure~\ref{fig:m4nciface}.  

\begin{example}\label{e:m4nciface}
The idea of the proof of Theorem~\ref{t:m4nciface} is to define maps $p_d:F_d \to F_d'$ that express each $e_\ww \in F_d \setminus F_d'$ in terms of generators of $F_d'$ in a manner that is consistent with $\partial$.  
For instance, it is natural to define
$$
p_1 e_2 = x_1 e_1
\qquad \text{since} \qquad
\partial e_2 = \partial x_1 e_1 = x_1^2,
$$
and $p_1(e_i) = e_i$ if $i \ne 2$.  
Composing $p_1$ and $\partial$ yields a map $F_2 \to F_1'$ that sends, e.g.,
\begin{align*}
e_{11} 
&\mapsto p_1 \partial e_{11} = p_1(x_1 e_1 - y^{b_{11}} e_2) = x_1 e_1 - x_1 e_1 = 0,
\\
e_{12} 
&\mapsto p_1 \partial e_{12} = p_1(x_1 e_2 - y^{b_{12}} e_3) = x_1^2 e_1 - y^{b_{12}} e_3,
\\
e_{21} 
&\mapsto p_1 \partial e_{21} = p_1(x_2 e_1 - y^{b_{21}} e_3) = x_1^2 e_1 - y^{b_{12}} e_3.
\end{align*}
Restricting $p_1\partial$ to $F_2'$ yields the second matrix in Figure~\ref{fig:m4nciface}; the second line above appears therein since $e_{12} \in F_2'$.  
From there, we may define $p_2 e_{11} = 0$ and $p_2 e_{21} = e_{12}$ based on the above, and after subsequently defining
$$
p_2 e_{23} = 0,
\qquad
p_2 e_{32} = x_1 (e_{31} - e_{13}),
\qquad
p_2 e_{22} = y^{b_{12}}e_{13},
\qquad \text{and} \qquad
p_2 e_{02} = x_1 e_{01}
$$
via analogous reasoning, restricting $p_2\partial$ to $F_3'$ yields the third matrix in Figure~\ref{fig:m4nciface}.  
\end{example}

In what follows, we say a term $c t^n e_\ww \in F_d'$ is \emph{reduced} if (i) it has class 0, or (ii) it has class 1 and $w_d = 1$.  We say an element of $F_d'$ is \emph{reduced} if all of its terms are reduced.  

\begin{lemma}\label{l:m4nciface}
Suppose there exists a map $\partial':F_d' \to F_{d-1}'$ such that:\ 
\begin{enumerate}[(i)]
\item 
the term $x_{w_d} e_{\widehat \ww}$ appears in $\partial' e_\ww$ for each $\ww \in W_d$; and 

\item 
for each $\ww \in W_d$, each term of $\partial' e_\ww$ not involving $e_{\widehat \ww}$ is reduced.  

\end{enumerate}
Then each $f \in F_{d-1}'$ is equivalent modulo $\partial' F_d'$ to a reduced element.  
\end{lemma}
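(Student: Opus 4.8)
The plan is to eliminate the non-reduced terms of $f$ one at a time, at each step subtracting a suitable $\partial' g$ with $g \in F_d'$ that cancels a chosen term while introducing only terms that are reduced or, in one exceptional case, strictly simpler. Since $\partial'$ is homogeneous it suffices to reduce each graded component of $f$ separately, so I would assume $f$ is homogeneous of degree $D$. Because $F_{d-1}'$ is generated in the single length $d-1$ and $W_{d-1}$ is finite, every homogeneous degree-$D$ element of $F_{d-1}'$ has at most $|W_{d-1}|$ terms (the monomial on each $e_\ww$ being forced by $D$), and every reduction below stays in this finite arena.

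The engine of the argument is the following move. Let $\tau = c\,t^n e_\ww$ be a term of class $r \in \{1,2,3\}$ with $\ww \in W_{d-1}$, and suppose $j \in \{1,2,3\}$ is chosen so that $\ww' := (w_1,\dots,w_{d-1},j) \in W_d$ and $n - a_j \in S$. By hypothesis (i), $e_{\widehat{\ww'}} = e_\ww$ occurs in $\partial' e_{\ww'}$; moreover, since $\partial' e_{\ww'}$ is homogeneous and the degree-$a_j$ graded component of $R = \kk[S]$ is the one-dimensional space $\kk\,x_j$, the entire $e_\ww$-component of $\partial' e_{\ww'}$ is forced to be a single nonzero scalar multiple $\lambda\,x_j e_\ww$. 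Setting $g := (c/\lambda)\,t^{n-a_j} e_{\ww'}$, the $e_\ww$-component of $\partial' g$ is exactly $c\,t^{n-a_j} x_j e_\ww = c\,t^n e_\ww = \tau$, so replacing $f$ by $f - \partial' g$ deletes $\tau$ and adds only $(c/\lambda)\,t^{n-a_j}\rho$, where by hypothesis (ii) the part $\rho$ of $\partial' e_{\ww'}$ supported off $e_\ww$ is reduced. This homogeneity observation is what lets me use (i),(ii) even though several terms may a priori sit on $e_{\widehat{\ww'}}$.

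The admissible choice of $j$, and the class of the multiplier $t^{n-a_j}$, depend on $r$ and $w_{d-1}$. In the three clean cases (a) $r=3$ (take $j=3$); (b) $r=2$ with $w_{d-1}=1$ (take $j=2$); and (c) $r=1$ with $w_{d-1}\ne 1$ (take $j=1$), the chosen $j$ is an admissible extension, $n - a_j \in S$ since $n \in S$ has class $r = j$ (so $n \ge a_j$ with $n - a_j$ a nonnegative multiple of $4$), and $a_j \equiv r$, so $t^{n-a_j}$ is a power of $y$, of class $0$. Multiplying the reduced $\rho$ by a power of $y$ leaves each term reduced (the basis vectors, hence their last entries, are unchanged), so these moves remove one non-reduced term and create none. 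The crux, and the step I expect to be the main obstacle, is $r=2$ with $w_{d-1}\ne 1$, where $j=2$ is not admissible. Here I would take $j=1$ (admissible since $w_{d-1}\ne 1$), noting $n - a_1 \in S$ because $n \ge a_2 = 2a_1$ forces $n - a_1 \ge a_1$ with $n - a_1$ of class $1$. Now $t^{n-a_1}$ has class $1$, so multiplying $\rho$ sends a class-$0$ reduced term $y^q e_{\mathbf v}$ to a class-$1$ term $y^{q'} x_1 e_{\mathbf v}$, and a class-$1$ reduced term $y^q x_1 e_{\mathbf v}$ (whose last entry is $1$) to $y^{q'} x_1^2 e_{\mathbf v} = y^{q'} x_2 e_{\mathbf v}$, a class-$2$ term with last entry $1$, using the relation $x_1^2 = x_2$. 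Thus this exceptional move trades one class-$2$ term with $w_{d-1}\ne 1$ for reduced terms together with class-$1$ terms and class-$2$ terms with last entry $1$, all of which are absorbed by the clean cases (a)--(c).

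Finally, to organize these moves into a terminating procedure, I would assign $\mu(f) = (h(f),\ell(f)) \in \ZZ_{\ge 0}^2$, where $h(f)$ counts the terms of class $2$ whose basis vector has last entry $\ne 1$ and $\ell(f)$ counts all non-reduced terms. The exceptional move strictly decreases $h$ (it deletes one such term and creates none, as its class-$2$ collateral has last entry $1$), hence decreases $\mu$ lexicographically no matter what it does to $\ell$; the clean moves leave $h$ unchanged and decrease $\ell$. Whenever $h(f)=0$, every non-reduced term falls under a clean case, so a clean move is available. Since $\mu$ is valued in $\ZZ_{\ge 0}^2$ it strictly decreases finitely often, terminating at an $f$ with no non-reduced terms, i.e.\ a reduced element equivalent to the original modulo $\partial' F_d'$. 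The one delicate point throughout is precisely the class-$2$, $w_{d-1}\ne 1$ case, where cancellation cannot preserve the class; it is rescued by the relation $x_1^2 = x_2$ forced by $a_2 = 2a_1$, which confines its collateral to strictly simpler non-reduced terms.
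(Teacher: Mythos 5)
Your proposal is correct and takes essentially the same route as the paper's proof: the same appended letters in each case (class~3 $\mapsto$ append $3$; class~2 with $w_{d-1}=1$ $\mapsto$ append $2$; class~1 with $w_{d-1}\ne 1$ $\mapsto$ append $1$), and the same exceptional move for class~2 with $w_{d-1}\ne 1$, appending $1$ and using $a_2 = 2a_1$ (i.e., $x_1^2 = x_2$) so that the collateral consists of class-1 terms and class-2 terms with last entry $1$. Your two refinements --- the homogeneity argument forcing the $e_\ww$-component of $\partial' e_{\ww'}$ to be a single scalar multiple $\lambda x_j e_\ww$, and the lexicographic measure $(h,\ell)$ guaranteeing termination of the interleaved clean-up --- are welcome rigor at points the paper's proof handles only implicitly (its ``one fewer class~2 term'' glosses over the new class-2, last-entry-$1$ terms the exceptional move creates).
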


\begin{proof}
For each class~3 term $c t^n e_\ww$ in $f$, we have $n - a_3 \in S$, so replacing $f$ with $f - c t^{n-a_3} \partial' e_{\ww,3}$ results in one fewer class 3 term.  Repeating this process for each class~3 term, we may assume $f$ has no terms with class~3.  Analogously, if a term $c t^n e_\ww$ in $f$ has class~2, then either $w_{d-1} = 1$, in which case $f - c t^{n-a_2} \partial' e_{\ww,2}$ has one fewer class~2 term, or $w_{d-1} \in \{2,3\}$, in which case $a_2 = 2a_1$ implies $n - a_1 \in S$ and thus $f - c t^{n-a_1} \partial' e_{\ww,1}$ has one fewer class~2 term.  In the end, we may assume $a$ also has no terms of class~2.  

Now, suppose $c t^n e_\ww$ is a class~1 term in $f$.  If $w_{d-1} \ne 1$, then $f - c t^{n-a_1} \partial' e_{\ww,1}$ has no $e_\ww$ term, and each new class~1 term is reduced by~(ii), so there is one less non-reduced term.  
As~such, after finitely many such replacements, we obtain an element equivalent to $f$ modulo $\partial' F_d'$ in which every term is reduced.  
\end{proof}

\begin{thm}\label{t:m4nciface}
Under the above definitions, 
performing appropriate row and column operations to the infinite Ap\'ery resolution yields a resolution 
\begin{equation}\label{eq:m4ncifaceres}
\mathcal F_\bullet' : 0 \longleftarrow R \longleftarrow F_1' \longleftarrow F_2' \longleftarrow \cdots
\end{equation}
for $\kk$ over $\kk[S]$.  Moreover, $\mathcal F_\bullet'$ is minimal if and only if $b_{12} > 0$.  
\end{thm}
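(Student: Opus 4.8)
The plan is to view the passage from $\mathcal F_\bullet$ to $\mathcal F_\bullet'$ as the cancellation of the unit entries of the infinite Apéry differential, and to use Lemma~\ref{l:m4nciface} to identify the complex that survives. By Theorem~\ref{t:medresolution} the complex $\mathcal F_\bullet$ resolves $\kk$, and its only unit entries are the $\pm 1$'s produced by a vanishing $b_{ij}$; on this facet $a_2 = 2a_1$ forces $b_{11} = 0$, so these are exactly the units coming from the relation $x_1^2 = x_2$. Since cancelling a unit entry in a free complex deletes one generator from its source and one from its target without changing homology, the complex obtained by cancelling all of the $b_{11}$-units is again a resolution of $\kk$; the substance of the theorem is the combinatorial claim that the generators surviving this cancellation are precisely the $e_\ww$ with $\ww \in W_d$, with differential as in Figure~\ref{fig:m4nciface}. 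The units that occur only on the bounding ray $a_3 = 3a_1$, where $b_{12} = 0$, are left in place, which is already the source of the minimality criterion.

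To carry this out I would define retractions $p_d\colon F_d \to F_d'$ by induction on $d$, extending the assignments of Example~\ref{e:m4nciface}: $p_d$ fixes $e_\ww$ for $\ww \in W_d$ and, for $\ww \notin W_d$, folds the first index violating the $W_d$-pattern into its neighbor using $x_2 = x_1^2$, as in $p_1 e_2 = x_1 e_1$, $p_2 e_{11} = 0$, and $p_2 e_{21} = e_{12}$. Setting $\partial_d' := p_{d-1}\,\partial_d|_{F_d'}$, the essential point is that this recursion is coherent, i.e. that $p_\bullet$ is a chain map $\mathcal F_\bullet \to \mathcal F_\bullet'$ restricting to the identity on $F_\bullet'$. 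Coherence simultaneously realizes the cancellation above and gives $\partial'^2 = 0$, since for $\ww \in W_d$ one computes $\partial_{d-1}'\partial_d' e_\ww = \partial_{d-1}' p_{d-1}\partial_d e_\ww = p_{d-2}\partial_{d-1}\partial_d e_\ww = p_{d-2}\partial^2 e_\ww = 0$. It also yields the two hypotheses of Lemma~\ref{l:m4nciface}: because $\widehat\ww \in W_{d-1}$, the leading term $x_{w_d}e_{\widehat\ww}$ of $\partial_d e_\ww$ is fixed by $p_{d-1}$, giving~(i), while the folding rules return only class~$0$ terms and class~$1$ terms ending in $1$, giving~(ii). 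I expect this coherence verification to be the main obstacle, as it requires checking that the folding rules commute with $\partial$ across every branch of the definition of $W_d$; I would organize it as a case analysis on the position and type of the first $W_d$-violation, in the spirit of the four-case computation in the proof of Theorem~\ref{t:m4ciface}.

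Granting coherence, $\mathcal F_\bullet'$ is a resolution already by the cancellation principle of the first paragraph, but one can also verify exactness self-containedly by the class-filtration method of Theorem~\ref{t:medresolution}. Given $f \in \ker\partial_d'$, Lemma~\ref{l:m4nciface} lets me subtract elements of $\image\partial_{d+1}'$ to assume $f$ is reduced, and it then remains to see that a reduced cycle vanishes. Each reduced term $t^n e_\ww$ of $f$ contributes the leading monomial $t^{\,n + a_{w_d}} e_{\widehat\ww}$ to $\partial_d' f$, and by hypothesis~(ii) every other contribution is reduced; tracking these leading monomials by class modulo~$4$ shows they cannot be cancelled, and hence $f = 0$, just as in the proof of Theorem~\ref{t:medresolution}. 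The only coincidence that must be excluded pairs a class~$0$ term with $w_d = 2$ against a class~$1$ term with $w_d' = 1$ over a common image $e_{\widehat\ww} = e_{\widehat{\ww'}}$; this is impossible because $w_d = 2$ forces $w_{d-1} = 1$ in $W_d$, whereas $w_d' = 1$ forbids $w_{d-1}' = 1$. Thus $\ker\partial_d' \subseteq \image\partial_{d+1}'$, and the reverse inclusion is $\partial'^2 = 0$.

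Finally, $\mathcal F_\bullet'$ is minimal exactly when no matrix for $\partial'$ contains a nonzero constant, that is, when every power of $y$ occurring is positive. The exponent $b_{11}$ has been absorbed into the retractions $p_d$ and no longer appears, and a direct check from the defining inequalities of $C_4$ shows $b_{13}, b_{22}, b_{23}, b_{33} > 0$ throughout this facet (using $2a_3 \ge a_2$ together with $a_1 \not\equiv a_3 \bmod 4$ for $b_{33}$). The only remaining exponent is $b_{12} = \tfrac14(3a_1 - a_3)$, which is positive on the relative interior of the facet and vanishes precisely on its bounding ray $a_3 = 3a_1$. Therefore $\mathcal F_\bullet'$ is minimal if and only if $b_{12} > 0$.
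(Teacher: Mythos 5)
Your overall architecture is the same as the paper's --- a chain map $p_\bullet$ from the infinite Ap\'ery resolution, the differential $\partial'_d = p_{d-1}\partial_d|_{F_d'}$, Lemma~\ref{l:m4nciface} to reduce cycles, class-tracking to kill reduced cycles, and the $b$-exponent analysis for minimality --- but there is a genuine gap exactly where you flag one. The ``coherence'' of your explicit folding retractions is not a deferrable verification: it is the entire content of the theorem, and the rule you propose (``fold the first index violating the $W_d$-pattern into its neighbor using $x_2 = x_1^2$'') is already inconsistent with the values forced by commutativity in Example~\ref{e:m4nciface}. There $p_2 e_{32} = x_1(e_{31} - e_{13})$ carries a correction term beyond the naive $x_1 e_{31}$, $p_2 e_{22} = y^{b_{12}} e_{13}$ is not produced by the substitution $x_2 = x_1^2$ at all, and $p_2 e_{23} = 0$. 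So any closed-form folding recursion needs correction terms whose existence, reducedness, and compatibility with $\partial$ is precisely what remains unproven, and these corrections propagate through the induction, so the finite case analysis you envision ``in the spirit of Theorem~\ref{t:m4ciface}'' is not clearly available. Your first paragraph's appeal to unit cancellation has the same issue: cancelling the $y^{b_{11}} = 1$ entries preserves homology, but identifying the survivors as $\{e_\ww : \ww \in W_d\}$ with the stated differential is again the deferred bookkeeping, not a consequence of the general principle.

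The paper dissolves this obstacle rather than confronting it: $p_{d-1}$ and $\partial'_d$ are constructed simultaneously by induction on $d$, with no formula for $p_{d-1}$ on $F_{d-1} \setminus F_{d-1}'$ ever written. Since $\partial^2 = 0$ and the diagram commutes through earlier stages, $p_{d-2}\partial_{d-1}e_\ww \in \ker \partial'_{d-2}$, and exactness of the truncated bottom row --- carried as inductive hypothesis (i) --- guarantees a $\partial'_{d-1}$-preimage $a_\ww$ exists; one stipulates $a_\ww = e_\ww$ when $\ww \in W_{d-1}$ and $a_\ww$ reduced otherwise, the latter possible by Lemma~\ref{l:m4nciface} whose hypotheses are also part of the induction. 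The chain-map property then holds by construction, $\partial'\partial' = 0$ follows as in your computation, and your third paragraph's exactness argument (which does match the paper's, including the correct exclusion of the class-$0$, $w_d = 2$ versus class-$1$, $w_d = 1$ collision via the structure of $W_d$) closes the induction loop, so that exactness at stage $d$ is available to define $p_d$. If you replace your explicit retraction with this non-constructive preimage choice, your proof goes through; your minimality paragraph is fine as written and agrees with the paper's ($b_{33} > 0$ because $a_1 \not\equiv a_3 \bmod 4$ rules out $a_2 = 2a_3$, leaving $b_{12}$ as the only exponent that can vanish on this facet).
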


\begin{proof}
We will construct a commutative diagram 
\[
\begin{tikzcd}
0 & R \arrow[l] \arrow[d, "p_0"] & F_1 \arrow[l, swap, "\partial_1"] \arrow[d, "p_1"] & F_2 \arrow[l, swap, "\partial_2"] \arrow[d, "p_2"] & \cdots \arrow[l] \\
0 & R \arrow[l] & F_1' \arrow[l, swap, "\partial_1'"] & F_2' \arrow[l, swap, "\partial_2'"] & \cdots \arrow[l]
\end{tikzcd}
\]
in which the bottom row is the desired resolution.  
First, define $p_0$ as the identity map on $R$, and define $\partial' e_\ww = \partial e_\ww$ for each $\ww \in W_1$.  Since $x_2 = x_1^2$, we have $\partial e_2 = \partial x_1 e_1$, so~$\partial'$ is indeed surjective onto the maximal monomial ideal of $R$.  

Proceeding by induction on $d \ge 2$, assume the maps $\partial_i'$ and $p_{i-1}$ are defined for $i < d$, and that the following hold:
\begin{enumerate}[(i)]
\item 
$\partial_{d-1}' F_{d-1}' = \ker \partial_{d-2}'$ (i.e., the bottom row is exact);

\item 
$p_{d-2}\partial_{d-1} e_\ww = \partial_{d-1}' e_\ww$ for each $\ww \in W_{d-1}$;

\item 
the term $x_{w_{d-1}} e_{\widehat \ww}$ appears in $\partial_{d-1}' e_\ww$ for each $\ww \in W_{d-1}$; and 

\item 
for each $\ww \in W_{d-1}$, each term of $\partial_{d-1}' e_\ww$ not involving $e_{\widehat \ww}$ is reduced.  
\end{enumerate}
Clearly (i)-(iv) hold in the case $d = 2$.  We now construct the maps $p_{d-1}$ and $\partial_d'$.  

First, fix $e_\ww \in F_{d-1}$.  If $d = 2$, then define 
\[
p_1 e_i = \begin{cases}
x_1 e_1 & \text{if $i = 2$;} \\
e_i & \text{otherwise.}
\end{cases}
\]
If, on the other hand, $d > 2$, then since $\partial_{d-2} \partial_{d-1} e_\ww = 0$, we have $p_{d-3} \partial_{d-2} \partial_{d-1} e_\ww = 0$.  By commutativity of the diagram, $\partial_{d-2}' p_{d-2} \partial_{d-1} e_\ww = 0$, and thus $p_{d-2} \partial_{d-1} e_\ww \in \ker \partial_{d-2}'$.  By~(i), we then have $p_{d-2} \partial_{d-1} e_\ww \in \partial_{d-1}' F_{d-1}'$.  
As such, choosing $p_{d-1} e_\ww = a_\ww$ for any element $a_\ww \in F_{d-1}'$ with $\partial_{d-1}' a_\ww = p_{d-2}\partial_{d-1} e_\ww$ ensures the diagram commutes.  We do so, subject to the following stipulations:\
\begin{itemize}
\item 
if $\ww \in W_{d-1}$, then choose $a_\ww = e_\ww$, which we may do by~(ii); and

\item 
if $\ww \notin W_{d-1}$, then $a_\ww$ is reduced, which we may do by (iii), (iv), and Lemma~\ref{l:m4nciface}.  

\end{itemize}
Having now defined $p_{d-1}$, for each $\ww \in W_d$ define
\[
\partial_d' e_{\ww} \mapsto 
x_{w_d} p_{d-1} e_{\widehat \ww} + \sum_{i=1}^{d-1} (-1)^{d-i} y^{b_{w_iw_{i+1}}} p_{d-1} e_{\tau_i \ww}.
\]
We record the following observations for each $\ww \in W_d$:
\begin{itemize}
\item 
by construction, $\partial_d' e_\ww = p_{d-1} \partial_d e_\ww$;

\item 
since $\widehat \ww \in W_{d-1}$, we have $p_{d-1} e_{\widehat \ww} = e_{\widehat \ww}$, so the term $x_{w_d} e_{\widehat \ww}$ appears in $\partial_d' e_\ww$; and

\item 
by the stipulations on $a_\ww$, each term of $\partial_d' e_\ww$ not involving $e_{\widehat \ww}$ is reduced.  

\end{itemize}
In particular, under the above definitions of $p_{d-1}$ and $\partial_d'$, the only thing left to complete the induction proof is to verify that $\partial_d' F_d' = \ker \partial_{d-1}'$.  

The first centered equation in the proof of Theorem~\ref{t:medresolution} verifies $\partial_d'\partial_{d-1}' = 0$, so in particular $\partial_d' F_d' \subseteq \ker \partial_{d-1}'$.  To~prove the reverse containment, fix~$f \in \ker \partial_{d-1}'$.  By~Lemma~\ref{l:m4nciface}, it suffices to assume $f$ has no terms of class 2 or 3, and that any class~1 term $k t^n e_\ww$ has $w_{d-1} = 1$.  We will prove that this forces $f = 0$.  Indeed, examining $\partial(k t^n e_\ww)$, the term involving $e_{\widehat\ww}$ has class 2, but since $w_{d-2} \ne 1$, no other term in $f$ has a class~2 term involving $e_{\widehat\ww}$ in its image under $\partial_{d-1}$.  This proves every term in $f$ has class~0.  
Lastly, examining the image under $\partial_{d-1}$ of each term $k t^n e_\ww$ in $f$, the term involving $e_{\widehat\ww}$ has nonzero class distinct from that appearing in the image under $\partial_{d-1}$ of any other term in $f$.  We conclude $f = 0$, thus proving exactness.  

All that remains in the minimality claim.  Since $a_2 = 2a_1$ and $a_1 \not\equiv a_3 \bmod 4$, we cannot have $a_2 = 2a_3$, so $b_{33} > 0$.  As such, every term in the image of $\partial$ has coefficient in $\kk[S]$ of positive degree unless $b_{12} = 0$.  
\end{proof}

\begin{figure}[t]
$
\begin{array}{c@{\:}c@{\:}c@{\:}c@{\:}c}
&
\begin{blockarray}{rccc}
	\\ \\
	&
	\s \mathbf{0} &
	\s \mathbf{1} &
	\s \mathbf{3} \\
	\begin{block}{r@{\,\,}[*{3}{@{\,\,}l}]}
		\s \mathbf{\emptyset} & 
		y &
		x_1 &
		x_3 \\
	\end{block}
\end{blockarray}
&&
\begin{blockarray}{@{}r@{\,\,\,}*{6}{@{}c}}
	&
	\s \mathbf{01} &
	\s \mathbf{03} &
	\s \mathbf{12} &
	\s \mathbf{\ \ 13\ \ \ \ } &
	\s \mathbf{31} &
	\s \mathbf{33} \\
	\begin{block}{@{}l@{\,\,\,}[*{6}{@{}l}]}
		\s \mathbf{0} & \phm x_1           & \phm x_3           & \phm               & \rlm y^{b_{13}}    & \rlm y^{b_{13}}    & \phm               \\
		\s \mathbf{1} & \rlm y             & \phm               & \phm x_1^2         & \phm x_3           & \phm               & \rlm x_1y^{b_{33}} \\
		\s \mathbf{3} & \phm               & \rlm y             & \rlm y^{b_{12}}    & \phm               & \phm x_1           & \phm x_3           \\
	\end{block}
\end{blockarray}
&
\\[-1em]
0 \leftarrow R & \filleftmap & R^3 & \filleftmap &
\hspace{15em}
\end{array}
$

$
\begin{array}{c@{\:}c@{\:}c@{\:}c@{\:}c@{}c}
&
\begin{blockarray}{@{}r@{\,\,\,}*{12}{@{}c}}
	&
	\s \mathbf{012} &
	\s \mathbf{013} &
	\s \mathbf{031} &
	\s \mathbf{033} &
	\s \mathbf{121} &
	\s \mathbf{123} &
	\s \mathbf{131} &
	\s \mathbf{133} &
	\s \mathbf{312} &
	\s \mathbf{313} &
	\s \mathbf{331} &
	\s \mathbf{333} \\
	\begin{block}{@{}l@{\,\,\,}[*{12}{@{}l}]}
		\s \mathbf{01} & \phm x_1^2         & \phm x_3           & \phm               & \rlm y^{b_{33}}    & \phm               & \phm               & \phm y^{b_{13}}    & \phm               & \phm x_1y^{b_{13}} & \phm               & \phm               & \rlm y^{b_{13}+b_{33}}  \\
		\s \mathbf{03} & \rlm y^{b_{12}}    & \phm               & \phm x_1           & \phm x_3           & \phm               & \phm               & \phm               & \phm y^{b_{13}}    & \phm               & \phm y^{b_{13}}    & \phm               & \phm                \\
		\s \mathbf{12} & \phm y             & \phm               & \phm               & \phm               & \phm x_1           & \phm x_3           & \phm               & \rlm y^{b_{33}}    & \phm               & \phm               & \phm y^{b_{33}}    & \phm                \\
		\s \mathbf{13} & \phm               & \phm y             & \phm               & \phm               & \rlm y^{b_{12}}    & \phm               & \phm x_1           & \phm x_3           & \phm               & \phm               & \phm               & \phm                \\
		\s \mathbf{31} & \phm               & \phm               & \phm y             & \phm               & \phm y^{b_{12}}    & \phm               & \phm               & \phm               & \phm x_1^2         & \phm x_3           & \phm               & \rlm x_1y^{b_{33}}  \\
		\s \mathbf{33} & \phm               & \phm               & \phm               & \phm y             & \phm               & \phm y^{b_{12}}    & \phm               & \phm               & \rlm y^{b_{12}}    & \phm               & \phm x_1           & \phm x_3            \\
	\end{block}
\end{blockarray}
&
\\[-1em]
\hspace{2em}
R^{6} & \filleftmap & R^{12} \leftarrow \cdots
\end{array}
$
\caption{Specialized resolution when $m = 4$ and $a_2 = 2a_1$.}
\label{fig:m4nciface}
\end{figure}

\begin{remark}\label{r:m4comparison}
There is a significant difference in the proofs of Theorems~\ref{t:m4ciface} and~\ref{t:m4nciface}.  For one, the latter is non-constructive; however, there is also a combintorial distinction.  In Theorem~\ref{t:m4nciface}, the map $\partial$ from Definition~\ref{d:infiniteaperyres} is still used, and in order to ensure minimality, care is taken to avoid the subsequence ``11'' in any summands of $F_d'$ so that $y^{b_{11}} = 1$ does not make an appearance.  In Theorem~\ref{t:m4ciface}, on the other hand, the subsequence ``12'' (which would yield a coefficient $y^{b_{12}} = 1$ in the map $\partial$), does appear in some chosen summands; minimality is instead ensured by avoiding the subsequence~``3'' and introducing an extra $x_i$ in some columns.  
\end{remark}

\begin{remark}\label{r:m4golod}
As a consequence of Theorems~\ref{t:m4ciface} and~\ref{t:m4nciface}, and the results of~\cite{kunzfiniteres}, one can see that if $m = 4$, then $R$ is either Golod or complete intersection.  Indeed, if $a_2 = 2a_1$ as in the current subsection, or if $a_2 = 2a_3$, then 
$$
P_\kk^R(z) = \frac{1 + z}{1 - 2z}
\qquad \text{and} \qquad
P_I^Q(z) = 3 + 2z
$$
by Theorem~\ref{t:m4nciface} and \cite[Example~4.2]{kunzfiniteres}, respectively, so one can readily verify that $R$ is Golod.  Aside from the interior, within which $S$ is MED and thus $R$ is Golod by Theorem~\ref{t:medresolution}, any numerical semgroup in a remaining face of $\mathcal C_4$ is complete intersection by Theorem~\ref{t:m4ciface} and the discussion at the start of Section~\ref{sec:m4spec}.  
\end{remark}

\section{The Koszul property and resolving over the associated graded}
\label{sec:associatedgraded}

When investigating the nature of infinite resolutions, one must consider the notion of Koszulness.

\begin{defn}
	A standard $\NN$-graded $\kk$-algebra $A$ is \textit{Koszul} when $\beta_{i,j}^A(\kk)=0$ whenever $i\neq j$, that is, whenever the residue field $\kk$ has linear resolution as an $A$-module. 
\end{defn}

Various criteria (though no complete characterization) exist for detecting Koszulness.  For example, if $I$ is the defining ideal of $A$, $I$ must be quadratic in order for $A$ to be Koszul. On the other hand, if $I$ has a quadratic Gr\"obner basis, then $A$ must be Koszul.  The converses for both of these statements are false, however.  More details on Koszul algebras can be found in
\cite{concakoszulsyzygies,concakoszulregularity,frobergkoszulalgebras,quadraticalgebrasbook} and the references therein.  

Upon observation, the infinite Ap\'ery resolution given in Section~\ref{sec:infiniteaperyres} is nearly linear in some sense, as the $x$ variables only appear in the differentials with exponent $1$. However, the resolution is not linear in the strictest sense, as the twists on the modules jump by more than $1$ at successive steps of the resolution due to the nonstandard grading on $R$. Indeed, the traditional notion of Koszul assumes a standard $\NN$-grading, so some generalization is necessary. We follow \cite{shellmonoid2,herzogstamatekoszulnumerical,rs10} in making such a generalization. 

\begin{defn}
The \textit{associated graded ring (with respect to $\mm$)}, written $\grmR$, is the $\NN$-graded algebra
\[\grmR = \bigoplus\limits_{i \geq 0} \mm^i/\mm^{i+1}.\]
For a non-zero polynomial $f$ in $R$, use $f^*$ to denote the homogeneous component of $f$ of lowest degree, called the \textit{initial form} of $f$, and use $I^* = \ideal{f^* \mid f \in I}$ for the ideal generated by the lowest degree components of elements in $I$. A set of polynomials $\{f_1, \ldots, f_r\}$ in $I$ is a \textit{standard basis} for $I$ if $I^* = \ideal{f_1^*, \ldots, f_r^*}$. 
\end{defn}

It can be shown that a standard basis is also a generating set for $I$. Furthermore, $I^*$ defines $\grmR$, that is, $\grmR = Q/I^*$. Details for these facts, as well as algorithms to compute $I^*$, can be checked in \cite[Sections~5.1~and~15.10.3]{Eis95}.

The associated graded ring is used to extend the notion of Koszulness to non-standard graded rings. 

\begin{defn}
An $\NN$-graded ring $R$ is \textit{Koszul} if $\grmR$ is Koszul. Furthermore, we call a semigroup $S$ Koszul if $\grm(\kk[S])$ is Koszul. 
\end{defn}

The above notion subsumes the usual definition of Koszul, since if $R$ is already standard $\NN$-graded, then $R \cong \grmR$. 


\begin{example}\label{ex:grmR}
Consider the numerical semigroups
\[
S = \<5,6,19\>,
\qquad
S' = \<5,21,69\>,
\qquad \text{and} \qquad
S'' = \<5, 31, 99\>,
\]
each of multiplicity $5$.  
The Ap\'ery sets $\{5, a_1, a_2, a_3, a_4\}$ of $S$, $S'$, and $S''$ each have $a_1$ and $a_4$ as minimal generators, and $a_2 = 2a_1$ and $a_3 = 3a_1$ as the only expressions for $a_2$ and $a_3$, so they all lie in the same face $F \subseteq C_5$.  
This shared face, depicted in Figure~\ref{fig:grmR}, implies that the generators for their defining toric ideals
\begin{align*}
I_S &= \ideal{x_1^4 - x_4y, x_1x_4 - y^5, x_4^2 - x_1^3y^4},
\\
I_{S'} &= \ideal{x_1^4 - x_4y^3, x_1x_4 - y^{18}, x_4^2 - x_1^3y^{15}},
\\ 
I_{S''} &= \ideal{x_1^4 - x_4 y^5, x_1x_4 - y^{26}, x_4^2 - x_1^3y^{21}}
\end{align*}
vary only in their $y$-exponents (the light, red dots in the poset for $F$ indicate the graded degree of these generators; see~\cite[Section~5]{kunzfaces3} for more detail).  However, their associated graded algebras, defined by 
\[
I_S^* = \ideal{x_1^5, yx_4, x_1x_4, x_4^2},
\qquad
I_{S'}^* = \ideal{x_1^4-x_4y^3, x_1x_4,x_4^2},
\qquad \text{and} \qquad
I_{S''}^* = \ideal{x_1^4, x_1x_4, x_4^2},
\]
vary significantly. For example, $I_S^*$ and $I^*_{S''}$ are purely monomial, while $I^*_{S'}$ is not.  

Looking more generally within the face containing $S$, $S'$, and $S''$, any numerical semigroup on the orange line $4a_1 = a_4 + 15$ splitting the face in Figure~\ref{fig:grmR} has assocated graded $I_{S'}^*$, any numerical semigroup in the shaded region below the orange line has associated graded $I_S^*$ or $\ideal{x_1^5, y^2x_4, x_1x_4, x_4^2}$, and any numerical semigroup in the unshaded region has associated graded $I_{S''}^*$.  
\end{example}

\begin{figure}[t]
\begin{center}
\includegraphics[height=1.8in]{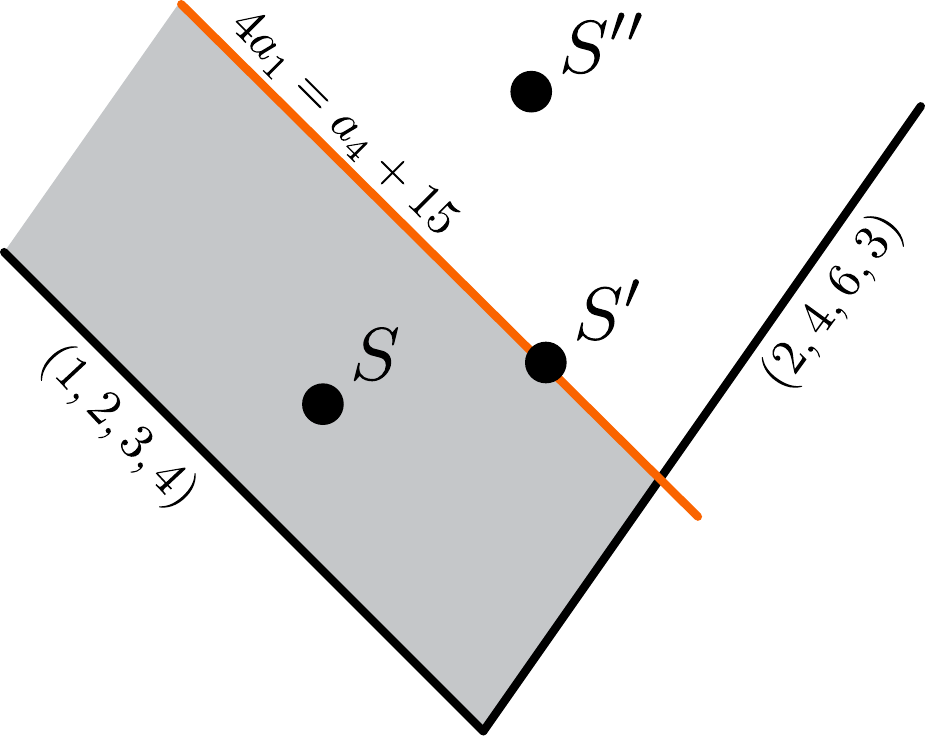}
\hspace{3em}
\includegraphics[height=1.8in]{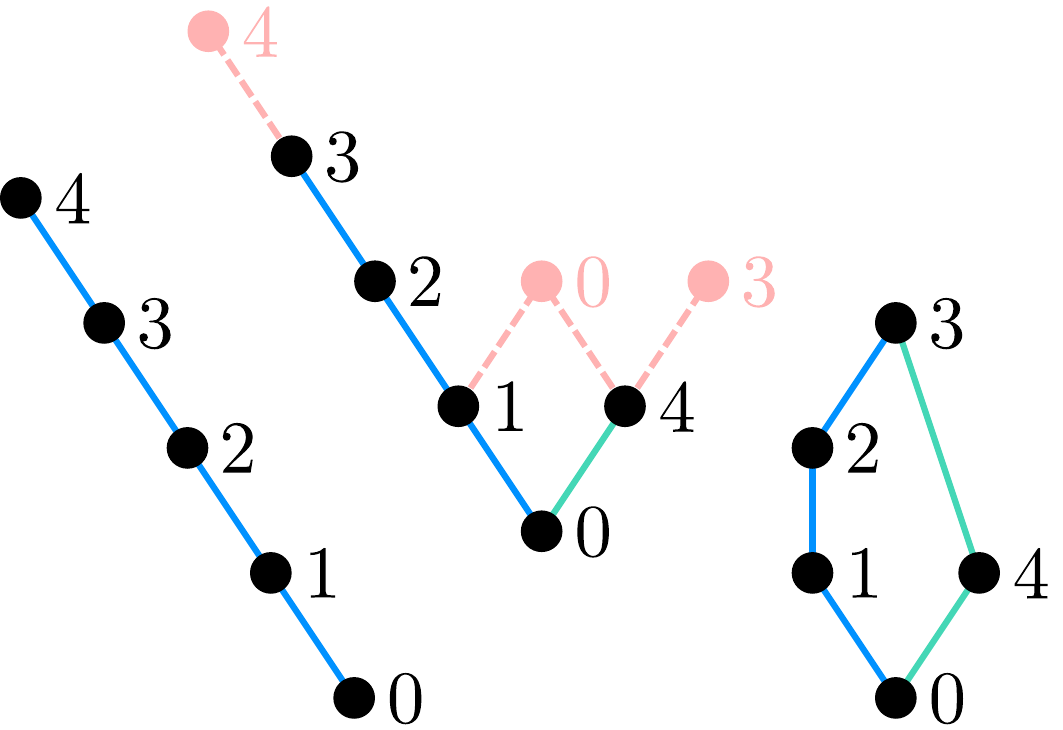}
\end{center}
\caption{The face of $C_5$ containing the semigroups in Example~\ref{ex:grmR} (left), and the Kunz posets of the face and its two rays (right).}
\label{fig:grmR}
\end{figure}

This variance can lead to some disappointment.

\begin{prop}
The Kunz cone does not distinguish between the following for semigroups coming from the same face: 
\begin{itemize}
	\item the finite Betti numbers $\beta_i^Q(\grmR)$;
	\item the Poincar\'e series of $\grmR$; and
	\item the Koszulness of $\grmR$, and therefore the Koszulness of $R$.
\end{itemize}
\end{prop}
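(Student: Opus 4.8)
All three assertions are negative, and I would prove each by exhibiting two numerical semigroups in a common face of $C_m$ whose invariants differ; the semigroups $S = \<5,6,19\>$ and $S'' = \<5,31,99\>$ of Example~\ref{ex:grmR}, which share a face of $C_5$, already handle the first two bullets. For the Betti numbers, recall that $\beta_1^Q(\grmR)$ equals the number of minimal generators of $I^*$. As $I_S^* = \ideal{x_1^5, yx_4, x_1x_4, x_4^2}$ has four minimal generators and $I_{S''}^* = \ideal{x_1^4, x_1x_4, x_4^2}$ has three, we obtain $\beta_1^Q(\grm(\kk[S])) = 4 \neq 3 = \beta_1^Q(\grm(\kk[S'']))$, so the $Q$-Betti numbers of the associated graded are not a face invariant.

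For the Poincar\'e series of $\grmR$ (the resolution of $\kk$ over $\grmR$, not over $Q$), I would invoke the standard identity
\[
\dim_\kk \Tor_2^A(\kk,\kk) = \binom{n}{2} + \mu(J),
\]
valid for any graded quotient $A = Q/J$ with $J \subseteq \mm^2$, where $n$ is the embedding dimension and $\mu(J)$ is the minimal number of generators of $J$; the two summands are the Koszul syzygies and the defining relations. Both $S$ and $S''$ have embedding dimension $n = 3$ (a face invariant), so this count equals $3 + \mu(I^*)$ and therefore differs: $\beta_2^{\grm(\kk[S])}(\kk) = 7 \neq 6 = \beta_2^{\grm(\kk[S''])}(\kk)$. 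Hence $P^{\grm(\kk[S])}_\kk(z)$ and $P^{\grm(\kk[S''])}_\kk(z)$ disagree in their degree-two coefficients.

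Koszulness is the subtle case. The three semigroups of Example~\ref{ex:grmR} cannot serve here, since each is non-Koszul: every $I^*$ listed there has a minimal generator of degree at least $4$, so $\beta_{2,j}^{\grmR}(\kk) \neq 0$ for some $j > 2$. To show Koszulness is not a face invariant, I would instead exhibit a single face containing both a Koszul and a non-Koszul semigroup. Koszul members are readily certified: whenever $I^*$ admits a quadratic Gr\"obner basis the ring $\grmR$ is Koszul, and this holds, for example, for MED semigroups whose $I^*$ is a quadratic monomial ideal $\ideal{x_ix_j : 1 \le i \le j \le m-1}$. The plan is to locate such an $S$ in the same face as a semigroup $S'$ whose associated graded is quadratic but fails to be Koszul, certifying the failure by producing a nonzero graded Betti number $\beta_{i,j}^{\grmR}(\kk)$ with $i \neq j$ (equivalently, by the breakdown of the Fr\"oberg relation $P^{\grmR}_\kk(z)\cdot H_{\grmR}(-z) = 1$).

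The main obstacle is exactly this last pairing. Because a nonquadratic minimal generator of $I^*$ already obstructs Koszulness and its presence is largely dictated by the Kunz poset, the non-Koszul witness sharing a face with a Koszul one must have a \emph{quadratic} associated graded that is nevertheless not Koszul. Such ``quadratic but not Koszul'' behavior is genuinely hard to arrange and to detect: it cannot be read off from the degrees of the generators of $I^*$, and certifying it requires computing enough of the minimal free resolution of $\kk$ over $\grmR$ (or enough terms of its Poincar\'e series) to locate a nonlinear syzygy. I would carry out this search and verification with a computer algebra system.
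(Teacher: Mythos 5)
Your treatment of the first two bullets is correct and matches the paper's, which uses the same pair $S = \<5,6,19\>$, $S'' = \<5,31,99\>$ and the same numbers ($4$ vs.\ $3$, and $7$ vs.\ $6$); your derivation of the second count from the standard identity $\dim_\kk \Tor_2^A(\kk,\kk) = \binom{n}{2} + \mu(J)$ is in fact a cleaner justification than the paper's appeal to computing the first few steps of the resolution, and it is legitimate here since embedding dimension is a face invariant and both $I_S^*$ and $I_{S''}^*$ lie in $\mm^2$ in the three-variable minimal presentation.

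The third bullet, however, has a genuine gap, and it stems from a false premise. You assert that the presence of a nonquadratic minimal generator of $I^*$ ``is largely dictated by the Kunz poset,'' and conclude that any non-Koszul witness sharing a face with a Koszul one must be quadratic-but-not-Koszul --- a notoriously delicate condition that you then only propose to search for by computer, so no proof is actually produced. But Example~\ref{ex:grmR}, which you cite, already refutes the premise: $S$ and $S''$ lie in the same face of $C_5$, yet $I_S^*$ has the degree-$5$ generator $x_1^5$ while $I_{S''}^*$ has the degree-$4$ generator $x_1^4$, so the generator degrees of $I^*$ --- and in particular quadraticity --- are \emph{not} face invariants. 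This is exactly the loophole the paper exploits: it takes $T = \<8,9,10,12,23\>$ and $T' = \<8,81,90,108,207\>$, which lie in the same face of $C_8$ because scaling the non-multiplicity generators by $9 \equiv 1 \bmod 8$ preserves the Ap\'ery-set relations; then $I_T^*$ has a quadratic Gr\"obner basis (so $T$ is Koszul), while $I_{T'}^*$ contains the cubic minimal generator $x_1x_2x_4$, so $\grm(\kk[T'])$ is not even quadratic and hence not Koszul. In short, the hard ``quadratic but not Koszul'' pairing you set out to find is unnecessary: once you drop the unsupported claim that quadraticity is constant on faces, the much easier counterexample --- quadratic Gr\"obner on one side, nonquadratic $I^*$ on the other --- settles the bullet, and your write-up as it stands does not.
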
 
\begin{proof}
The semigroups $S$ and $S''$ from Example~\ref{ex:grmR} are already enough to see the first two items. Namely, $\beta_1^Q(Q/I^*_S) = 4$ and $\beta_1^Q(Q/I^*_{S''}) = 3$. Furthermore, if one computes the first few steps of the resolutions of the field, one finds that $\beta_2^{\grm(\kk[S])}(\kk) = 7$, but $\beta_2^{\grm(\kk[S''])}(\kk)=6$.

The following example gives a pair $T$ and $T'$ where $\kk[T]$ is Koszul but $\kk[T']$ is not even quadratic, but $T$ and $T'$ come from the same face of $C_m$.  Let $T = \ideal{8,9,10,12,23}$ and $T' = \ideal{8,81,90,108,207}$.  Note that $T'$ is obtained from $T$ by multiplying each generator besides $8$ by $9$, which preserves the relations within the Ap\'ery set since $9 \equiv 1 \bmod 8$, and so $T$ and $T'$ are in the same face of the Kunz cone $C_8$.  The associated graded rings in this case are quotients of $\kk[y,x_1,x_2,x_4,x_7]$ with the standard grading. 

One can check with Macaulay2~\cite{M2} that 
\[I_T^* = \ideal{x_1^2 - yx_2, x_2^2 - yx_4, x_4^2, yx_7, x_1x_7, x_2x_7, x_4x_7, x_7^2},\] 
and the generators listed form a quadratic Gr\"obner basis, so $T$ is Koszul. However, 
\[I_{T'}^* = \ideal{x_1^2,x_2^2, x_4^2, x_1x_2x_4, x_1x_7, x_2x_7, x_4x_7, x_7^2},\] 
which is not even quadratic due to the presence of $x_1x_2x_4$, and so $T'$ is not Koszul. 
\end{proof}

\begin{remark}\label{r:medkoszul}
It is worth pointing out that \cite{herzogstamatekoszulnumerical} proves that MED semigroups \textit{are} Koszul, so such pathologies can only appear on the boundary of $C_m$. In this regard, numerical semigroup rings in fixed multiplicity are ``generically'' Koszul. 
\end{remark}

\section{Open Problems}
\label{sec:futurework}

In~\cite{shellmonoid2,shellmonoid}, $\beta_i^R(\kk)$ is expressed in terms of homology of the divisibility poset of~$S$.  Corollary~\ref{c:specialization} implies these Betti numbers are in fact determined by a finite subposet, namely the Kunz poset of $S$.  This begs the following natural question, especially in light of similar results in the finite case~\cite{squarefreedivisorcomplex}.  

\begin{prob}\label{pb:shellakunzposet}
Locate a formula for $\beta_i^R(\kk)$ in terms of the Kunz poset of $S$.  
\end{prob}

By~\cite{gentoricratpoincare}, so-called ``generic'' toric rings are Golod, so Golodness is a common property among toric rings.  Although numerical semigroup rings are rarely generic in the sense of~\cite{gentoricratpoincare}, Corollary~\ref{c:medgolod} implies Golodness is still a common property within the family of numerical semigroup rings since most numerical semigroups are MED.  
As the faces of $C_m$ provide a natural means for classifying numerical semigroups, a more refined question about the prevelence of Golodness now has a combinatorial approach.  

\begin{prob}\label{pb:golodfaces}
Obtain a combinatorial (i.e., Kunz poset-theoretic) characterization of when $R$ is Golod.  How common are the faces of $C_m$ containing numerical semigroups for which $R$ is Golod?  
\end{prob}

Another property of interest is the rationality of the Poincar\'e series $P_\kk^R(z)$, which by Proposition~\ref{p:uniformpoincare} is uniform across any given face of $C_m$.  The smallest known example of a numerical semigroup $S$ for which $P_\kk^R(z)$ is irrational, given in~\cite{irrpoincare}, has $m = 18$.  An answer to the following question would yield a computational approach to determining whether smaller examples exist (see~\cite{wilfmultiplicity} for detail on face lattice computations of $C_m$ with $m \le 18$).  

\begin{prob}\label{pb:irrationalpoincarefaces}
Obtain a combinatorial (i.e., Kunz poset-theoretic) characterization of when $P_\kk^R(z)$ is rational.  
\end{prob}

By Theorem~\ref{t:medresolution}, each summand in the minimal resolution of $\kk$ over $R$ corresponds to a word $\ww \in \ZZ_m^d$.  In the nomenclature of theoretical computer science, one may define a \emph{language} $\mathcal L_S$ consisting of all such words $\ww$ (note that $\mathcal L_S$ is dependent only on the Kunz poset of $S$ by Corollary~\ref{c:specialization}).  
By the results in Section~\ref{sec:m4spec}, for any numerical semigroup with $m = 4$, $\mathcal L_S$ is a regular language, which implies $P_\kk^R(z)$ is rational (see, e.g., the discussion and references in~\cite{combofromratgen}).  
The following, although a loftier goal than the above problems, is thus one possible approach to Problem~\ref{pb:irrationalpoincarefaces}.  

\begin{prob}\label{pb:language}
Classify the language $\mathcal L_S$ in terms of the Kunz poset of $S$.  
\end{prob}

Cellular resolutions are a popular tool from combinatorial commutative algebra used to resolve monomial ideals over the polynomial ring~\cite{BPS98,BS98}.  For example, the Koszul resolution of the field over the polynomial ring can be seen as a cellular resolution supported on the simplex.  Given the combinatorial nature of numerical semigroup rings, after some preliminary computations, we make the following conjecture.

\begin{conj}\label{conj:cellularaperyres}
One can ascribe a cellular structure to the infinite Ap\'ery resolution.  
\end{conj}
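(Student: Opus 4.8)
The plan is to exhibit the infinite Ap\'ery resolution $\mathcal F_\bullet$ as the labeled cellular chain complex of an explicit regular CW complex (more precisely, a $\Delta$-complex), taking as the supporting space a semi-simplicial set closely modeled on the reduced bar construction of the cyclic monoid $(\ZZ_m, +)$. The starting observation is that the combinatorial data of Definition~\ref{d:infiniteaperyres} is already semi-simplicial: the admissible words $\ww = (w_1, \ldots, w_d)$ (those with $w_i \ne 0$ for $i > 1$) should index the $(d-1)$-cells, and the two families of operators appearing in $\partial$ --- the delete-last map $\ww \mapsto \widehat\ww$ and the merge maps $\ww \mapsto \tau_i\ww$ for $1 \le i \le d-1$ --- furnish exactly the $d$ codimension-one faces of a $(d-1)$-simplex. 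That these $d$ operators assemble into a well-defined $\Delta$-complex is precisely the content of identities (i)--(iii) established in the proof of Theorem~\ref{t:medresolution}; in particular $\tau_i\tau_j = \tau_{j-1}\tau_i$ for $i < j$ is the standard simplicial relation $d_id_j = d_{j-1}d_i$, while (i)--(ii) record how delete-last interacts with the merges. The count $m(m-1)^{d-1}$ then reads off as the number of nondegenerate $(d-1)$-simplices of this bar-type construction, with the first letter $w_1 \in \ZZ_m$ recording a coefficient factor and the remaining $d-1$ nonzero letters recording the bar symbols.

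First I would realize this semi-simplicial set geometrically, obtaining a CW complex $X$ whose $(d-1)$-cells are the admissible length-$d$ words and whose attaching maps are dictated by $\widehat{(\cdot)}$ and the $\tau_i$. Next I would equip $X$ with an $R$-valued labeling: assign to the cell $e_\ww$ the $S$-graded degree $\sum_i \deg(x_{w_i})$, and to each codimension-one incidence the coefficient appearing in $\partial$ --- namely $x_{w_d}$ on the delete-last face and $y^{b_{w_iw_{i+1}}}$ on the merge face at position $i$. With the standard orientation signs of a $\Delta$-complex (which match the $(-1)^{d-i}$ of Definition~\ref{d:infiniteaperyres} after a single global reindexing of the facet order), the resulting labeled cellular boundary map reproduces $\partial$ term by term. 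Conceptually, the unlabeled complex is the chain complex of the bar-type construction of $\ZZ_m$, and the powers of $y$ implement the cocycle $b_{ij} = \tfrac1m(a_i + a_j - a_{i+j})$ recording the carry in the Ap\'ery multiplication $x_ix_j = y^{b_{ij}}x_{i+j}$, so that the labeling deforms the group-combinatorial skeleton into the numerical-semigroup resolution.

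The main obstacle is conceptual rather than computational. The established theory of cellular resolutions (Bayer--Sturmfels, Bayer--Peeva--Sturmfels) resolves monomial ideals over a polynomial ring, with cells labeled by monomials and acyclicity detected through acyclicity of the degree-bounded subcomplexes; here $\mathcal F_\bullet$ resolves $\kk$ over the non-regular, non-monomial ring $R$, is infinite, and carries incidence labels $x_{w_d}$ that are not powers of a single variable, so the classical criterion does not apply verbatim. Thus the real work is to formulate the correct notion of a labeled CW complex supporting such a resolution --- most naturally a CW complex together with an $R$-valued twisting cochain encoding the cocycle $b$ --- that is broad enough to subsume the bar-type construction above while still deserving the name ``cellular.'' Acyclicity itself is not at issue, since it is already supplied by Theorem~\ref{t:medresolution}; the content of the conjecture is the geometric identification, and the crux is pinning down this framework and verifying that $X$, with the labeling above, satisfies it.
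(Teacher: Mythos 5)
You should first be clear that the paper contains no proof to compare against: the statement is Conjecture~\ref{conj:cellularaperyres}, posed in the open-problems section, so your writeup must stand as a self-contained proof, and by your own admission it does not --- your final paragraph concedes that ``the crux is pinning down this framework and verifying that $X$, with the labeling above, satisfies it,'' which is to say the decisive step is deferred. What you do have is correct and worth recording: the identities (i)--(iii) in the proof of Theorem~\ref{t:medresolution} are exactly the semi-simplicial identities under the relabeling $d_0 = {}$delete-last and $d_j = \tau_{d-j}$ for $1 \le j \le d-1$ (one checks $d_id_j = d_{j-1}d_i$ reduces precisely to (i), (ii), (iii)), and under this relabeling the signs $(-1)^{d-i}$ are literally the simplicial signs $(-1)^j$, so no rescaling trickery is even needed. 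One can also verify that the $d$ facets of each cell $e_\ww$ are pairwise distinct since all letters $w_i$ with $i>1$ are nonzero. But iterated faces do collapse --- e.g.\ for $\ww = (w_1,w_2,w_3)$ with $w_2 + w_3 \equiv 0 \bmod m$, the vertices $(w_1)$ and $(w_1+w_2+w_3)$ of the triangle $e_\ww$ coincide --- so the object you build is only a $\Delta$-complex, not a regular CW complex; your opening claim of regularity should be withdrawn.

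The genuine gap is that ``cellular structure,'' at the level where you stop, is vacuous: every bar-type resolution is a chain complex whose basis is indexed by simplices with simplicial face maps. What gives cellular resolutions content in the framework the paper cites is that exactness is equivalent to acyclicity of degree-selected subcomplexes, and here you get the logic backwards: you declare acyclicity ``not at issue'' and the classical criterion inapplicable, when in fact the bridge is available and is the step you needed to build. Because $R \cong \kk[S]$ has one-dimensional $S$-graded components $R_s = \kk\cdot t^s$, and because every entry of $\partial$ is a single signed monomial, the degree-$s$ strand of $\mathcal F_\bullet$ is precisely the reduced $\kk$-chain complex of the subcomplex $X_{\preceq s} = \{\ww : s - \deg(e_\ww) \in S\}$; hence Theorem~\ref{t:medresolution} is \emph{equivalent} to the statement that $X_{\preceq s}$ is acyclic for every $s \in S$, in the spirit of \cite{shellmonoid,shellmonoid2,squarefreedivisorcomplex}. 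Carrying out that translation would actually prove the weak reading of the conjecture, but you never make it; and for the strong reading suggested by the paper's model (the Koszul complex supported on a genuine simplex, as in \cite{BPS98,BS98}) you would still have to produce a polyhedral or regular CW support, possibly via a shelling or discrete Morse matching on the $X_{\preceq s}$, which the bar $\Delta$-complex alone does not supply. As written, the proposal is a credible program with its main step missing, not a proof.
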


Given the consistency of the resolution of $\kk$ across each face of $C_m$, do there exist other families of modules (e.g., quotients by certain monomial ideals) whose resolutions are also consistent across the face?  For instance, after some preliminary computations, the Betti numbers of the quotient $M = R/\<x_1, \ldots, x_{m-1}\>$, resolved over $R$, appear to be consistent across the face (note that $M$ is spanned over $\kk$ by finitely many powers of $y$, but its dimension over $\kk$ can vary for semigroups in the same face of $C_m$).  

\begin{prob}\label{prob:idealfamilies}
Locate families of ideals $I \subseteq R$ for which the resolutions over $R$ are consistent across the face of $C_m$ containing $S$.  
\end{prob}

To this end, we also pose the following, inspired by the resolutions constructed in~\cite{taylorci}.  

\begin{prob}
Does there exist a Taylor-like resolution for monomial ideals over $R$?  
\end{prob}




\section*{Acknowledgements}

The authors would like to thank David Eisenbud for his feedback and several helpful conversations, and in particular for identifying portions of \cite{kunz} that had been overlooked for many years.


\end{document}